\newcommand{\xitem}{
  \par\hangindent3em\hangafter1
  \noindent\makebox[3em][l]{$\triangleright$}%
  \ignorespaces}
 \newcommand{\enter}[0]{
 \newline\newline
 }
\newcommand{\vectordos}[2]{
\begin{pmatrix}
#1\\#2
\end{pmatrix}
}
\newcommand{\vectortres}[3]{
\begin{pmatrix}
#1\\#2\\#3
\end{pmatrix}
}
\begin{document}

\setlength{\parindent}{0pt}

\title[maintitle = {Numerical approximation of partial differential equations with MFEM library},
	othertitle = {Undergraduate Degree Work - Mathematics\\Directed by: Boyan Lazarov \& Juan Galvis},
	shorttitle = {}
]
\begin{authors}[] 
\author[firstname = {Felipe},
	surname = {Cruz},
	institutionnumber = {1},
	email = {fcruzv@unal.edu.co},
]
\end{authors}
\begin{affiliations}
\affiliation[
	department = {Departamento de Matem\'aticas},
	institution = {Universidad Nacional de Colombia},
	city = {Bogot\'a D.C.},
	country = {Colombia}
]
\end{affiliations}

\begin{mainabstract}
We revise the finite element formulation for Lagrange, Raviart-Thomas, and Taylor-Hood finite element spaces. We solve Laplace equation in first and second order formulation, and compare the solutions obtained with Lagrange and Raviart-Thomas finite element spaces by changing the order of the shape functions and the refinement level of the mesh. Finally, we solve Navier-Stokes equations in a two dimensional domain, where the solution is a steady state, and in a three dimensional domain, where the system presents a turbulent behaviour. All numerical experiments are computed using MFEM library, which is also studied.\enter
\keywords{Finite Elements, MFEM library, Lagrange, Raviart-Thomas, Taylor-Hood, Laplace Equation, Navier-Stokes Equations.}
\end{mainabstract}
	
\begin{otherabstract}
Revisamos la formulaci\'on de elementos finitos para los espacios de elementos finitos de Lagrange, Raviart-Thomas y Taylor-Hood. Solucionamos la ecuaci\'on de Laplace en su formulaci\'on de primer y segundo orden, y comparamos las soluciones obtenidas con los espacios de elementos finitos de Lagrange y Raviart-Thomas al cambiar el orden de las funciones base y el nivel de refinamiento de la malla. Finalmente, resolvemos las ecuaciones de Navier-Stokes en un dominio bidimensional, donde la soluci\'on es un estado estable, y en un dominio tridimensional, donde el sistema presenta un comportamiento turbulento. Todos los experimentos num\'ericos se realizan utilizando la librer\'ia MFEM, la cual es tambi\'en estudiada.\enter
\keywords{Elementos Finitos, Librer\'ia MFEM, Lagrange, Raviart-Thomas, Taylor-Hood, Ecuaci\'on de Laplace, Ecuaciones de Navier-Stokes.}
\end{otherabstract}
\newpage
\tableofcontents
\newpage
\section{Preliminaries}\label{sec:theory}
In this section we are going to recall the theoretical background needed in the rest of the paper. First, we are going to review the finite element methods used for the Laplace equation in second and first order form.  We write the strong and weak form of the problem and introduce the Lagrange and mixed finite element spaces. We also introduce the 
MFEM library by giving an overview of its main characteristics and the general structure of a finite element code in MFEM.

\subsection{Partial Differential Equations}\label{sec:pde}
For the scope of this work, partial differential equations are of the form $$F\left(u,p,\frac{\partial p}{\partial x_i},\frac{\partial u}{\partial x_i},\frac{\partial^2 p}{\partial x_i^2},\frac{\partial^2 u}{\partial x_i^2},f\right)=0;\ \ i=1,2,3,4$$ where $u:\Omega\subseteq\mathbb{R}^3\rightarrow U\subseteq\mathbb{R}^3$, $p:\Omega\subseteq\mathbb{R}^3\rightarrow P\subseteq\mathbb{R}$, $f$ is a restriction parameter and $F$ is \textbf{any} mathematical expression within its variables.\enter
It is well known that PDEs have multiple solutions, in fact, there is a vectorial space consisting of all the solutions for a given equation. For this reason, the equation is usually presented with a boundary condition that forces a unique solution for the equation \cite{Lagrange}. If some problem is being solved in a given domain $\Omega\subseteq\mathbb{R}^3$, whose boundary is $\Gamma$, then the problem has the form
\begin{equation*}
    \left\{\begin{split}&F\left(u,p,\frac{\partial p}{\partial x_i},\frac{\partial u}{\partial x_i},\frac{\partial^2 p}{\partial x_i^2},\frac{\partial^2 u}{\partial x_i^2},f\right)=0\text{  in }\Omega,\\
    &F_0\left(u,p,\frac{\partial p}{\partial x_i},\frac{\partial u}{\partial x_i},\frac{\partial^2 p}{\partial x_i^2},\frac{\partial^2 u}{\partial x_i^2},g\right)=0\text{  in }\Gamma.\end{split}\right.
\end{equation*}
The two main equations that we treat are the Laplace equation and the Navier-Stokes equation, whose represent a fluids phenomenon in real life. Note that $x_1=x,x_2=y,x_3=z,x_4=t$ (in time-space interpretation) and that, $p$ is the fluid's pressure and $u$ is the fluid's velocity.
\subsubsection{Laplace equation}\label{sec:laplace}
Laplace equation consists on finding $p:\Omega\subseteq\mathbb{R}^3\rightarrow\mathbb{R}$ such that 
\begin{equation}\label{eq:laplace}
    -\Delta p=f,\text{  in }\Omega
\end{equation}
where $f:\Omega\rightarrow\mathbb{R}$ is a given function and $\Delta p = \nabla\cdot\nabla p = \frac{\partial^2 p}{\partial x^2}+\frac{\partial^2 p}{\partial y^2}+\frac{\partial^2 p}{\partial z^2}$ \cite{Galvis}. It is clear that the equation is a partial differential equation of the form presented before in Section \ref{sec:pde} because it only depends on the second order derivatives of $p$.\enter
Now, we present two common ways of imposing a boundary condition to the problem: Dirichlet and Neumann. Let $\Gamma$ be the boundary of $\Omega$.\enter
\texttt{Dirichlet boundary condition}\\
The Dirichlet boundary condition is 
\begin{equation}\label{eq:dirichlet}
    p=g\text{ in }\Gamma
\end{equation}
where $g:\Gamma\rightarrow\mathbb{R}$ is a given function \cite{Galvis}. If $g=0$, the condition is called homogeneous Dirichlet boundary condition.
\enter
\texttt{Neumann boundary conditions}\\
The Neumann boundary condition is
\begin{equation}\label{eq:neumann}
    -\nabla p\cdot\eta=h\text{ in }\Gamma
\end{equation}
where $h:\Gamma\rightarrow\mathbb{R}$ is a given function and $\eta$ is the boundary's normal vector \cite{Galvis}.
\enter
As seen on \cite{Galvis}, problem \eqref{eq:laplace} can be stated with the two types of boundary condition \eqref{eq:dirichlet} and \eqref{eq:neumann}, by imposing each condition on different parts of $\Gamma$. However, the version of the problem presented later on Section \ref{sec:Lagrange} has the homogeneous Dirichlet boundary condition.\enter
Finally, the Laplace equation \eqref{eq:laplace} can be formulated on two different ways: first order formulation and second order formulation. The second order formulation is the one presented already on \eqref{eq:laplace}, which involves second order derivatives. Now, if we set $u=\nabla p$, the problem can be stated \cite{Galvis} as 
\begin{equation}\label{eq:formulacaoordemuno}
    \left\{\begin{split}
    \mbox{div}(u)=-f\text{  in }\Omega,\\
    u=\nabla p\text{  in }\Omega,
\end{split}\right.
\end{equation}
which is the first order formulation for the problem and, notice that it only involves first order derivatives of $u$ and $p$.\enter
On Section \ref{sec:Vs}, we compare both formulations of Laplace equation \eqref{eq:laplace} and \eqref{eq:formulacaoordemuno}. We use Lagrange finite elements to solve \eqref{eq:laplace} and Raviart-Thomas finite elements to solve \eqref{eq:formulacaoordemuno}.
\newpage
\subsubsection{Navier-Stokes equations}\label{sec:navierstokes}
In this section we revise incompressible Stokes and Navier-Stokes equations with Dirichlet boundary condition. First, incompressible Navier-Stokes equations consist on finding the velocity $u:\Omega\rightarrow\mathbb{R}^3$ and the pressure $p\rightarrow\mathbb{R}$ that solve the system of equations \eqref{eq:NavierStokes} \cite{TH}.
\begin{equation}\label{eq:NavierStokes}
    \begin{split}
        \frac{\partial u}{\partial t}+(u\cdot\nabla)u-\nu\Delta u+\nabla p=f,\text{ in }\Omega,\\
        \nabla\cdot u = 0,\text{ in }\Omega,\\
        u=g,\text{ in } \Gamma,
    \end{split}
\end{equation}
where $\Omega\subseteq\mathbb{R}^3$ is the spatial domain, $\Gamma=\partial\Omega$ the boundary of the domain, $\nu$ is called the kinematic viscosity (more information below), $f$ is the forcing term (given function), and $g$ is the specified Dirichlet boundary condition.\enter
On the other hand, when removing the non-linear term $(u\cdot\nabla)u$ from the first equation, we get Stokes linear equations which is system of equations \eqref{eq:Stokes} \cite{TH}.
\begin{equation}\label{eq:Stokes}
    \begin{split}
        \frac{\partial u}{\partial t}-\nu\Delta u+\nabla p=f,\text{ in }\Omega,\\
        \nabla\cdot u = 0,\text{ in }\Omega,\\
        u=g,\text{ in } \Gamma.
    \end{split}
\end{equation}
Take into account that $u(x,y,z,t)=(u_x,u_y,u_z)$ is a vector for each point in time-space and that $p(x,y,z,t)=p$ is a scalar for each point in time-space. For real life fluid problems modeled by these equations, $(x,y,z)$ is the position of a fluid's particle in space and $t$ is the time. Also, the equation $\nabla\cdot u=0$ is the one that establishes the incompressible condition for the fluid.\enter
\texttt{Kinematic Viscosity}\\
On the modeling of fluid flow, as presented on \cite{NS}, a very important parameter appears and its called \textit{Reynolds number}, $Re$. On \cite{NS}, it is defined as $$Re=\frac{\rho LU}{\mu},$$ where $\rho$ is the density of the fluid, $L$ is the characteristic linear dimension of the domain of the flow, $U$ is some characteristic velocity, and $\mu$ is the fluid's viscosity. For example, as seen on \cite{NS}, $L$ is defined supposing that $\Omega=[0,L]^n$, i.e., $L$ is the length of the domain on each direction, and $U$ can be taken as the square root of the average initial kinetic energy in $\Omega$.\enter
However, for the purpose of this work we neglect the parameters $L$ and $U$ in such way that $Re=\frac{\rho}{\mu}$. Therefore, the kinematic viscosity of a fluid is, according to \cite{NS}, \begin{equation}\label{eq:reynolds}
    \nu:=\frac{\mu}{\rho}=\frac{1}{Re}.
\end{equation} 
Notice that if the viscosity of the fluid, $\mu$, is higher and the density of the fluid, $\rho$, is lower, then, the kinematic viscosity of the fluid, $\nu$, is higher. This parameter quantifies the resistance that a fluid imposes to movement due to an external force, like gravity \cite{NS}.\enter
On Section \ref{sec:NSExp}, we do some numerical experiments in 2D and 3D using the Navier Miniapp of MFEM library, which solves \eqref{eq:NavierStokes}, by using the formulation presented on Section \ref{sec:NSFEM}.

\subsection{The Finite Element Method}\label{FEMSummary}
First, on Section \ref{sec:elliptic} we show how the solution for a differential equation is also a solution for a minimization problem and a variational problem. This serves as a basic case for showing that partial differential equations are, in fact, solved via minimization or variational problems; which are the ones solved with finite element methods.\enter
Then, in Sections \ref{sec:Lagrange} and \ref{sec:Mixed} we study two finite element methods. On both of them, the following procedure was applied:
\begin{enumerate}
    \item Consider the problem of solving Laplace equation \eqref{Laplace} with homogeneous Dirichlet boundary condition. 
    \item Multiply by some function (\textit{test function}) and integrate by parts. Apply boundary conditions.
    \item Discretize the domain and select  finite-dimensional function spaces for the solution and the test functions.
    \item Produce a  matrix system to solve for solution weights in the linear combination representation of the approximated solution.
\end{enumerate}
The basis functions that form part of the finite-dimensional spaces are called \textit{shape functions}. In Lagrange formulation, those are the functions in $V_h$, and in mixed formulation, those are the functions in $H_h^k$ and $L_h^k$, where the parameter $h$ denotes the size of the elements in the triangulation of the domain. Moreover, in Lagrange formulation the boundary condition is essential, and in mixed formulation, it is natural.\enter
Furthermore, on Section \ref{sec:HighOrder} we show high order shape functions used in Lagrange finite element method and, on Section \ref{sec:NSFEM} we study the most common finite elements used to approximate the solution for Navier-Stokes equations.

\subsubsection{FEM for elliptic problems}\label{sec:elliptic}
Let $\mathbb{D}$ be the two-point boundary value problem \eqref{eq:D} taken from \cite{Lagrange}.
\begin{equation}\label{eq:D}
    \begin{split}
        &-u''(x)=f(x),\ x\in(0,1),\\
        &u(0)=u(1)=0,
    \end{split}
\end{equation}
where $f$ is a given continuous function and $u''(x)=\frac{d^2u(x)}{dx^2}$.
Notice that this problem is a partial differential equation of the form presented on section \ref{sec:pde} but with all the functions being of the form $\mathbb{R}\rightarrow\mathbb{R}$ and $\Omega=(0,1)$ being a 1D domain, with homogeneous boundary condition ($0$ on $\Gamma=\{0,1\}$).\enter
By integrating $-u''(x)=f(x)$ twice, it is clear that the problem \eqref{eq:D} has a unique solution $u$ \cite{Lagrange}. For example, if $f(x)=e^x$
\begin{equation*}
    \begin{split}
        &-u''(x)=e^x\\
        \implies&-u'(x)=e^x+c_1\\
        \implies&-u(x)=e^x+c_1x+c_2
    \end{split}
\end{equation*}
and by applying boundary conditions,
\begin{equation*}
    \left\{
    \begin{split}
        &u(0)=0\implies c_2=-1,\\
        &u(1)=0\implies c_1=1-e.
    \end{split}\right.
\end{equation*}
It follows that $u(x)=-e^x+(e-1)x+1$ is the unique solution. Recall that the boundary conditions force the problem to have a unique solution, as mentioned previously on the work.\enter
Now, following \cite{Lagrange}, define the linear space $V$ of all continuous functions on $[0,1]$ that vanish at $\{0,1\}$, whose derivative is piecewise continuous and bounded on $[0,1]$. Also, define the linear functional $F:V\rightarrow\mathbb{R}$, by
\begin{equation*}
    F(v)=\frac{1}{2}\int_0^1\left[v'(x)\right]^2dx-\int_0^1f(x)v(x)dx.
\end{equation*}
With this settled, let $\mathbb{M}$ be the optimization problem of finding $u\in V$ such that 
\begin{equation}\label{eq:opt}
    F(u)\leq F(v)
\end{equation}
for all $v\in V$, and let $\mathbb{V}$ be the variational problem of finding $u\in V$ such that
\begin{equation}\label{eq:var}
    \int_0^1u'(x)v'(x)dx=\int_0^1f(x)v(x)dx
\end{equation}
for all $v\in V$.\enter
Remark that if $u,v\in V$ then $w=u+\alpha v\in V$ for any $\alpha\in\mathbb{R}$, because $w$ is continuous in $[0,1]$; $w(1)=w(0)=u(0)+\alpha v(0)=0$, i.e. vanishes on $\{0,1\}$; and $w'=u'+\alpha v'$ is piecewise continuous and bounded on $[0,1]$. The rest of this section is focused on showing that $\mathbb{D}$, $\mathbb{M}$ and $\mathbb{V}$ are equivalent problems \cite{Lagrange}.\enter
\texttt{Equivalence }$\mathbb{D}\iff\mathbb{V}$\\
Let $u_\mathbb{D}$ be the solution for $\mathbb{D}$. Then, multiply $-u_\mathbb{D}''(x)=f(x)$ on both sides by some $v\in V$ (this $v$ is called a test function) and integrate to obtain
\begin{equation}\label{equiv1:eq1}
    -\int_0^1u_\mathbb{D}''(x)v(x)dx=\int_0^1f(x)v(x)dx.
\end{equation}
Using the formula for integration by parts,
\begin{equation}\label{equiv1:eq2}
    \int_0^1 a(x)b'(x)dx=a(1)b(1)-a(0)b(0)-\int_0^1a'(x)b(x)dx,
\end{equation}
with $a(x)=v(x)$ and $b(x)=u_\mathbb{D}'(x)$, and applying the fact that $v(0)=v(1)=0$ we get
\begin{equation}\label{equiv1:eq3}
    \int_0^1u_\mathbb{D}''(x)v(x)dx=-\int_0^1 v'(x)u_\mathbb{D}'(x)dx.
\end{equation}
Then, replacing \eqref{equiv1:eq3} on equation \eqref{equiv1:eq1} we get that
\begin{equation}\label{equiv1:eq4}
    \int_0^1 v'(x)u_\mathbb{D}'(x)dx=\int_0^1f(x)v(x)dx.
\end{equation}
Notice that \eqref{equiv1:eq4} is the equation associated to the variational problem $\mathbb{V}$ (see \eqref{eq:var}), and as $v\in V$ was arbitrary, we have that $u_\mathbb{D}$ satisfies \eqref{equiv1:eq4} (and so, \eqref{eq:var}) for all $v\in V$. Therefore, $u_\mathbb{D}$ is also a solution for $\mathbb{V}$.\checkmark\enter
On the other hand, let $u_\mathbb{V}\in V$ be the solution for $\mathbb{V}$ (on \cite{Lagrange}, it is shown that the solution for $\mathbb{V}$ is unique). Then, we have by \eqref{eq:var} that, for all $v\in V$,
\begin{equation}\label{equiv1:eq5}
    \int_0^1u_\mathbb{V}'(x)v'(x)dx-\int_0^1f(x)v(x)dx=0.
\end{equation}
Now, applying integration by parts \eqref{equiv1:eq2} in the same way as before, we obtain \eqref{equiv1:eq3}. Replacing \eqref{equiv1:eq3} on \eqref{equiv1:eq5} and unifying the integral, we get
\begin{equation}\label{equiv1:eq6}
    -\int_0^1[u_\mathbb{V}''(x)+f(x)]v(x)dx=0
\end{equation}
for all $v\in V$. Then, by \eqref{theo:1} we have that $u_\mathbb{V}''(x)+f(x)=0$ for $x\in(0,1)$. In other words,
\begin{equation}\label{equiv1:eq7}
    -u_\mathbb{V}''(x)=f(x),\ x\in(0,1).
\end{equation}
Notice that \eqref{equiv1:eq7}, along with the fact that $u_\mathbb{V}(0)=u_\mathbb{V}(1)=0$, is the equation associated to the differential problem $\mathbb{D}$ (see \eqref{eq:D}). Therefore, $u_\mathbb{V}$ is also a solution for $\mathbb{V}$ as long as $u''(x)$ exists and is continuous (regularity assumption). But, this last condition for $u_\mathbb{V}$ holds, as seen on \cite{Lagrange}, so, the result holds.\checkmark\enter
To complete the proof, we have to prove \eqref{theo:1}, which is exercise 1.1 from \cite{Lagrange}:
\begin{equation}\label{theo:1}
    \begin{split}
        \text{If $w$ is continuous on $[0,1]$ and}\\
        \int_0^1w(x)v(x)dx=0 \text{ for all }v\in V,\\
        \text{then }w(x)=0 \text{ for all }x\in(0,1).
    \end{split}
\end{equation}
\textit{\textbf{Proof:}} By contradiction, suppose that $w(x)\not=0$ for some $x\in(0,1)$. Then, $w(x_0)=c\in\mathbb{R}$ for $x_0\in(0,1)$. Without loss of generality, suppose that $c>0$ (for $c<0$ the argument is analogous). As $w$ is continuous, there is an interval centered at $x_0$, $I=(x_0-\delta,x_0+\delta)$, such that $f(x)>0$ for $x\in I$. Now, define $$v(x)=\left\{\begin{split}
    0,\ if\ &x\not\in I,\\
    \frac{c}{\delta}[x+(\delta-x_0)],\ if\ &x\in(x_0-\delta,x_0)\\
    -\frac{c}{\delta}[x-(\delta+x_0)],\ if\ &x\in(x_0,x_0+\delta),
\end{split}\right.$$
which is a function sketched on Figure \ref{fig:theo1}.
\begin{figure}[h!]
    \centering
    \includegraphics[scale=0.5]{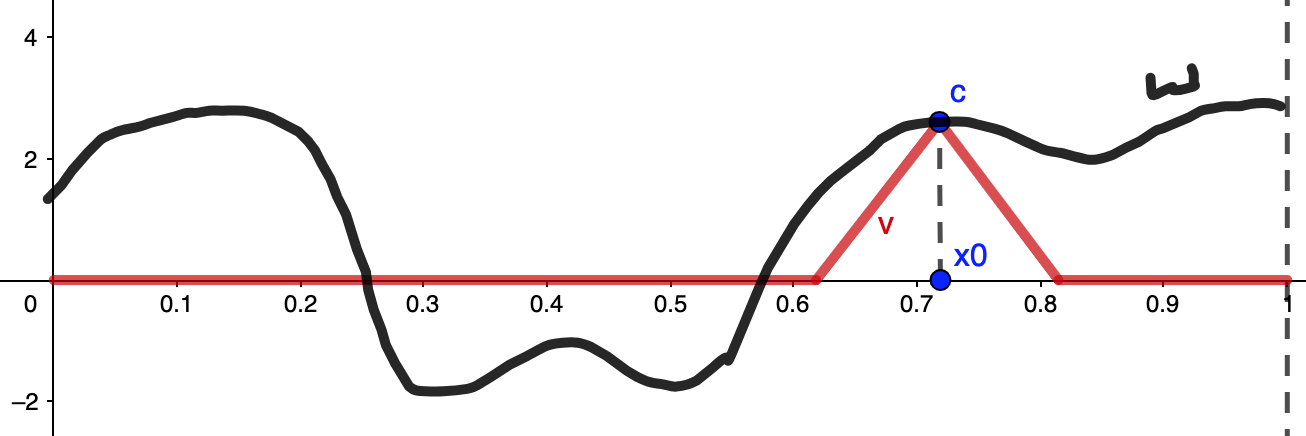}
    \caption{Sketch of the proof for \eqref{theo:1} involving an arbitrary function $w(x)$ (solid black line) and the built function $v\in V$ (solid red line).}
    \label{fig:theo1}
\end{figure}\\
Notice that $v(0)=v(1)=0$ because there is always a $\delta>0$ such that $0<x_0-\delta$ and $x_0+\delta<1$, and $v(x)=0$ for $x\not\in I$. Also, $v(x)$ is continuous on $[0,1]$ by construction (it was built with 4 lines that have connections in $(x_0-\delta,0),(x_0,c)$ and $(x_0+\delta,0)$). Moreover, $$v'(x)=\left\{\begin{split}
    0,\ if\ &x\not\in I,\\
    \frac{c}{\delta},\ if\ &x\in(x_0-\delta,x_0),\\
    -\frac{c}{\delta},\ if\ &x\in(x_0,x_0+\delta),
\end{split}\right.$$
is clearly piecewise continuous and bounded on $[0,1]$. Therefore, $v\in V$.\enter
However, as $w(x)v(x)>0$ for $x\in I$ and $w(x)v(x)=0$ for $x\not\in I$, then$$\int_0^1w(x)v(x)dx=\int_{x_0-\delta}^{x_0+\delta}w(x)v(x)dx>0.$$
In other words, if $w(x)\not=0$ for some $x\in (0,1)$, then we found $v\in V$ such that $\int_0^1w(x)v(x)dx\not=0$, which contradicts the hypothesis that $\int_0^1w(x)v(x)dx=0$ for all $v\in V$. Therefore, $w(x)=0$ for all $x\in(0,1)$.\qed
\enter
\texttt{Equivalence }$\mathbb{V}\iff\mathbb{M}$\\
Let $u_\mathbb{V}\in V$ be the solution for $\mathbb{V}$. Take some $v\in V$ and set $w=v-u_\mathbb{V}\in V$. Then
\begin{equation*}
    \begin{split}
        &F(v)=F(u+w)\\
        =&\frac{1}{2}\int_0^1\left[u_\mathbb{V}'(x)+w'(x)\right]^2dx-\int_0^1f(x)[u_\mathbb{V}(x)+w(x)]dx\\
        =&\frac{1}{2}\left(\int_0^1[u_\mathbb{V}'(x)]^2dx+\int_0^12u_\mathbb{V}'(x)w'(x)dx+\int_0^1[w'(x)]^2dx\right)\\
        &-\int_0^1f(x)u_\mathbb{V}(x)dx-\int_0^1f(x)w(x)dx\\
        =&\frac{1}{2}\int_0^1[u_\mathbb{V}'(x)]^2dx-\int_0^1f(x)u_\mathbb{V}(x)dx+\int_0^1u_\mathbb{V}'(x)w'(x)dx\\
        &-\int_0^1f(x)w(x)dx+\frac{1}{2}\int_0^1[w'(x)]^2dx\\
        \stackrel{\eqref{equiv1:eq4}}{=}&\frac{1}{2}\int_0^1[u_\mathbb{V}'(x)]^2dx-\int_0^1f(x)u_\mathbb{V}(x)dx+\frac{1}{2}\int_0^1[w'(x)]^2dx\\
        =&F(u_\mathbb{V})+\frac{1}{2}\int_0^1[w'(x)]^2dx\geq F(u_\mathbb{V}).
    \end{split}
\end{equation*}
In other words, we have that
\begin{equation}\label{equiv2:eq1}
    F(v)\geq F(u_\mathbb{V}).
\end{equation}
Notice that \eqref{equiv2:eq1} is the equation associated to the optimization problem $\mathbb{M}$ (see \eqref{eq:opt}), and as $v\in V$ was arbitrary, we have that $u_\mathbb{V}$ satisfies \eqref{equiv2:eq1} (and so, \eqref{eq:opt}) for all $v\in V$. Therefore, $u_\mathbb{V}$ is also a solution for $\mathbb{M}$.\checkmark\enter
On the other hand, let $u_\mathbb{M}\in V$ be a solution for $\mathbb{M}$. Then, for any $v\in V$ and any $\alpha\in\mathbb{R}$ we have that $u_\mathbb{M}+\alpha v\in V$, and so $F(u_\mathbb{M})\leq F(u_\mathbb{M}+\alpha v)$ by \eqref{eq:opt}. Therefore, the minimum for $F(u_\mathbb{M}+\alpha v)$ is achieved at $\alpha=0$.\enter
Now, define $g(\alpha)=F(u_\mathbb{M}+\alpha v)$, which is a differentiable function \cite{Lagrange}. We have that
\begin{equation}\label{equiv2:eq2}
\begin{split}
    &g(\alpha)=F(u_\mathbb{M}+\alpha v)\\
    =&\frac{1}{2}\int_0^1\left[u_\mathbb{M}'(x)+\alpha v'(x)\right]^2dx-\int_0^1f(x)[u_\mathbb{M}(x)+\alpha v(x)]dx\\
    =&\frac{1}{2}\int_0^1[u_\mathbb{M}'(x)]^2dx+\alpha\int_0^1u_\mathbb{M}'(x)v'(x)dx+\frac{\alpha^2}{2}\int_0^1[v'(x)]^2dx\\
    &-\int_0^1f(x)u_\mathbb{M}(x)dx-\alpha\int_0^1f(x)v(x)dx.
\end{split}
\end{equation}
Since $g(\alpha)=F(u_\mathbb{M}+\alpha v)$ has a minimum at $\alpha=0$, then $g'(0)=0$. Therefore, taking the derivative on \eqref{equiv2:eq2} and replacing $\alpha=0$, we get
\begin{equation}\label{equiv2:eq3}
\begin{split}
    &g'(\alpha)=\int_0^1u_\mathbb{M}'(x)v'(x)dx+\alpha\int_0^1[v'(x)]^2dx-\int_0^1f(x)v(x)dx\\
    \implies&g'(0)=\int_0^1u_\mathbb{M}'(x)v'(x)dx-\int_0^1f(x)v(x)dx.
\end{split}
\end{equation}
Finally, as $g'(0)=0$, we get from \eqref{equiv2:eq3}, that
\begin{equation}\label{equiv2:eq4}
    \int_0^1u_\mathbb{M}'(x)v'(x)dx=\int_0^1f(x)v(x)dx.
\end{equation}
Notice that \eqref{equiv2:eq4} is the equation associated to the variational problem $\mathbb{V}$ (see \eqref{eq:var}), and as $v\in V$ was arbitrary, we have that $u_\mathbb{M}$ satisfies \eqref{equiv2:eq4} (and so, \eqref{eq:var}) for all $v\in V$. Therefore, $u_\mathbb{M}$ is also a solution for $\mathbb{V}$.\checkmark\enter
As mentioned before, on \cite{Lagrange}, it is shown that the solution for $\mathbb{V}$ is unique. Therefore, as $\mathbb{V}$ and $\mathbb{M}$ are equivalent, then the solution for $\mathbb{M}$ is also unique.
\subsubsection{Lagrange finite elements}\label{sec:Lagrange}
In this section we consider the Laplace equation \cite{Lagrange},
\begin{equation}\label{Laplace}
\begin{split}
        -\Delta &p=f\text{ in } \Omega,\\
        &p=0\text{ in }\Gamma,
\end{split}
\end{equation}
where $\Omega\subseteq\mathbb{R}^2$ is an open-bounded domain with boundary $\Gamma$, $f$ is a given function and $\Delta p=\frac{\partial^2p}{\partial x^2}+\frac{\partial^2p}{\partial y^2}$. Following \cite{Lagrange},
consider the space
\begin{equation*}
    V=\{v:v \text{ continuous on } \Omega, \frac{\partial v}{\partial x},\frac{\partial v}{\partial y} \text{ piecewise continuous on }\Omega\text{ and } v=0 \text{ on } \Gamma\}.
\end{equation*}
Alternatively we can work in the Sobolev space (see \cites{Lagrange, Galvis, Mixed})
\begin{equation*}
    H^1(\Omega)=\{v\in L^2(\Omega)\ \Big|\ \frac{\partial v}{\partial x},\frac{\partial v}{\partial y}\in L^2(\Omega)\}.
\end{equation*}
Here $L^2(\Omega)=\{v:\Omega\rightarrow\mathbb{R}\ \Big|\int_\Omega v^2<\infty\}$.\\

We multiply the first equation of \eqref{Laplace} by some $v\in V$ (referred to as \textit{test function}) and integrate over $\Omega$ to obtain
\begin{equation}\label{Laplace2}
    -\int_\Omega \Delta p\ v=\int_\Omega f\ v.
\end{equation}
Applying divergence theorem we obtain the Green's formula (\cite{Lagrange}),
\begin{equation}\label{Green}
    -\int_\Omega \Delta p\ v=\int_\Omega\nabla v\cdot\nabla p-\int_\Gamma v\ \nabla p\cdot\eta,
\end{equation}
where $\eta$ is the outward unit normal to $\Gamma$. Since $v=0$ on $\Gamma$, the third integral equals $0$. Note that the boundary integral does not depend on $p$'s value on $\Gamma$ but rather on the normal derivative of $p$ in $\Gamma$. Due to this fact the 
boundary condition $p=0$ on $\Gamma$ is know as an \textbf{essential} boundary condition.\\

Then, replacing \eqref{Green} on \eqref{Laplace2}, we get,
\begin{equation}\label{Laplace3}
    \int_\Omega\nabla v\cdot\nabla p=\int_\Omega f\ v.
\end{equation}
This holds for all $v\in V$. This is called weak formulation of the Laplace equation 
\eqref{Laplace}. We remark that, according to \cite{Lagrange},  if $p\in V$ satisfies \eqref{Laplace3} for all $v\in V$ and is sufficiently regular, then $p$ also satisfies \eqref{Laplace}, i.e., it's a (classical) solution for our problem. For more details see \cite{Lagrange} and references therein. \\

In order to set the problem for a computer to solve it, we are going to discretize it and encode it into a linear system.\enter
First, consider a \textit{triangulation} $T_h$ of the domain $\Omega$. This is, $T_h=\{K_1,\dots,K_m\}$ a set of non-overlapping triangles such that $\Omega=K_1\cup\dots\cup K_m$ and no vertex ($N_i$) of one triangle lies on the edge of another triangle, as seen on Figure \ref{fig:Triangulation}. 
\newpage
\begin{figure}[h!]
    \centering
    \includegraphics[scale=0.29]{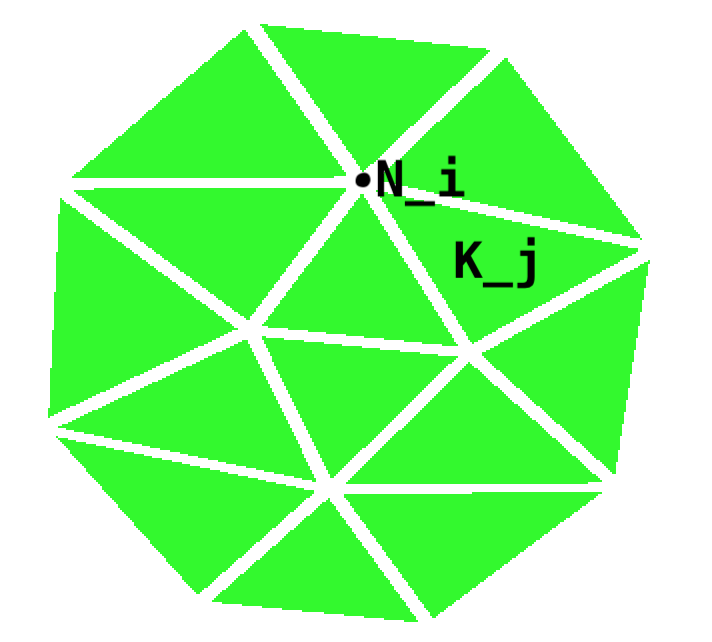}
    \caption{A triangulation for a given domain $\Omega$ showing a node $N_i$, and formed by some triangles $K_j$. \textit{Note:} Triangles have been separated in the edges to take a better look, but the triangulation has no empty spaces. Visualization: \cite{Glvis}.}
    \label{fig:Triangulation}
\end{figure}

The $h$ in the notation $T_h$ is a measure of the size of mesh, it usually refers to a typical element diameter or perhaps to the largest element diameter in the triangulation. In this manuscript $h$ is defined by  $h=\max\{\mbox{diam}(K):K\in T_h\}$ where $\mbox{diam}(K)=\text{longest side of }K$.\\
\newline
Now, let $V_h=\{v:v \text{ continuous on }\Omega, v|_K \text{ linear for }K\in T_h,\ v=0\text{ on }\Gamma\}$.\\
We consider the nodes ($N_1,\dots,N_M$) of the triangulation that are not on the boundary, because $p=0$ there, and we define some functions $\varphi_j\in V_h$ in such way that \[\varphi_j(N_i)=\left\{ \begin{array}{lcc}
             1 &   ,\ i=j\\
             \\0 &  ,\ i\not=j \\
             \end{array}
   \right.\]
for $i,j=1,\dots,M$. See Figure  \ref{fig:ShapeFunc} for an illustration of 
$\varphi_j\in V_h$.
\begin{figure}[h!]
    \centering
    \includegraphics[scale=0.29]{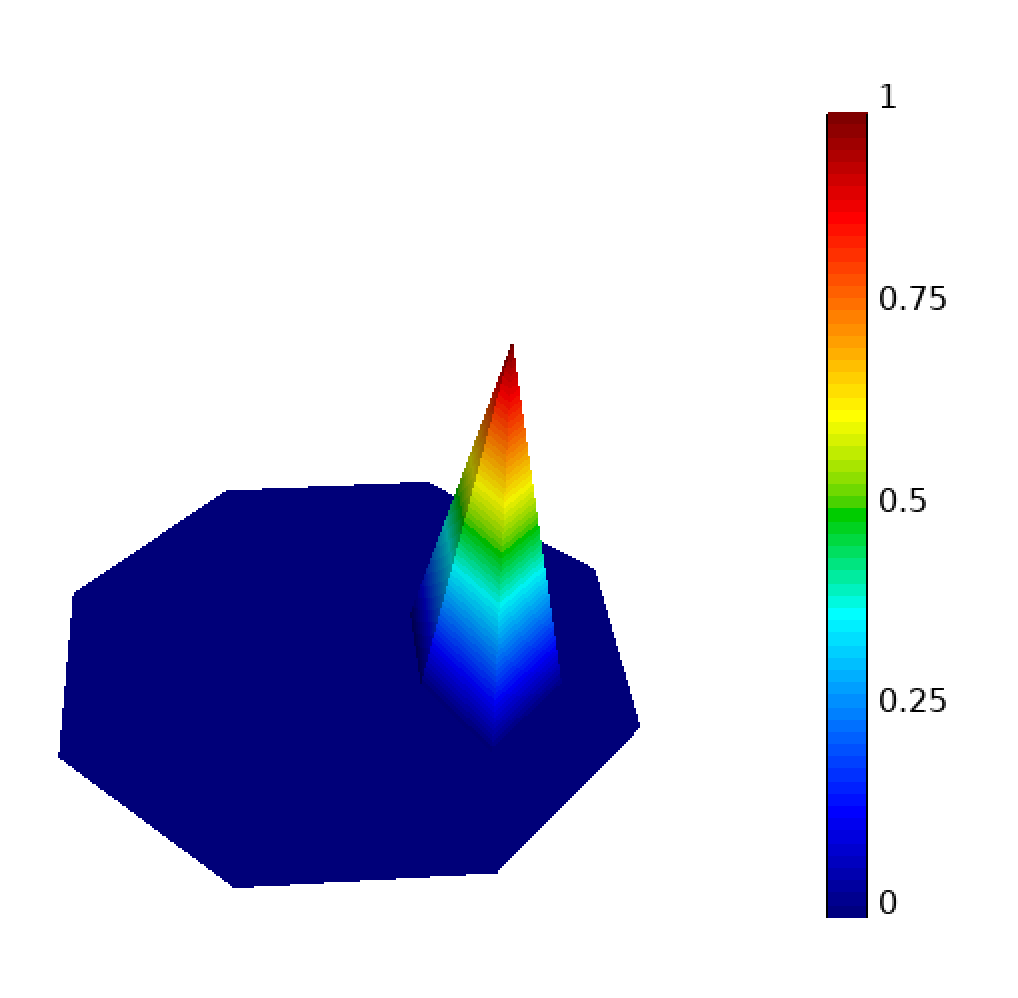}
    \includegraphics[scale=0.29]{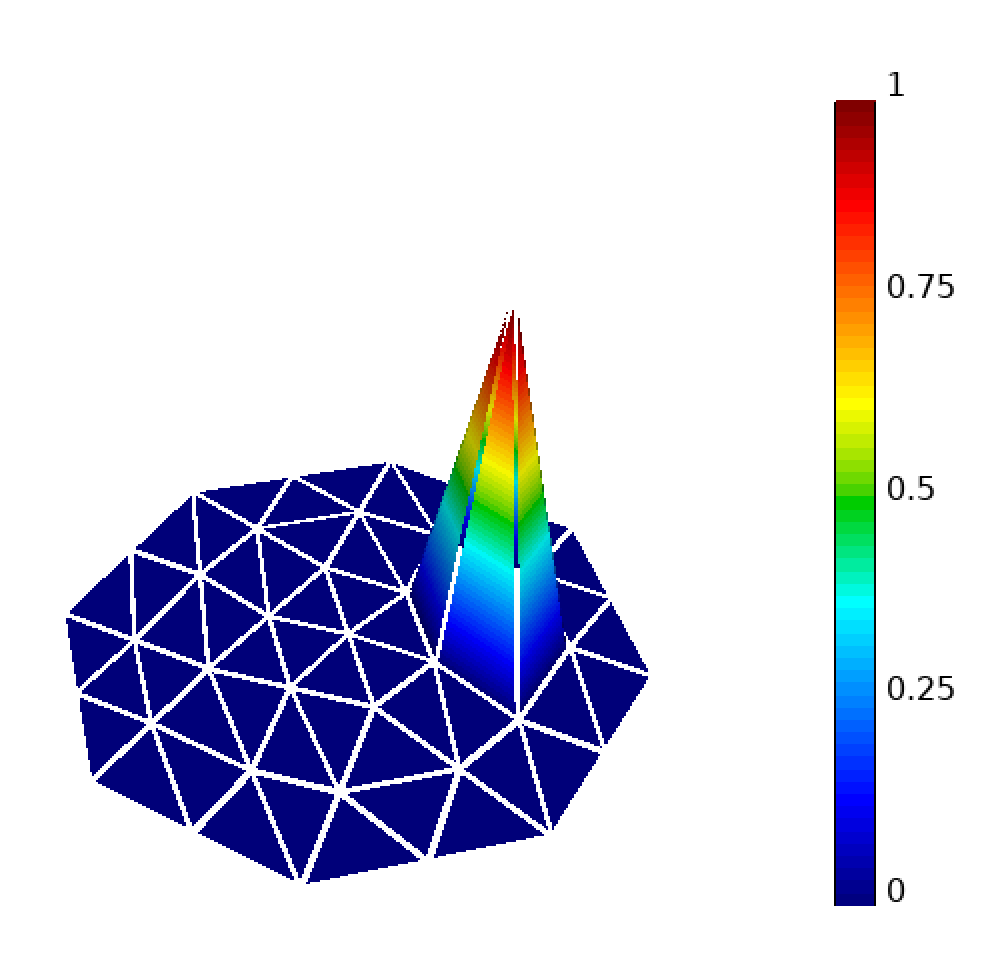}
    \caption{Illustration of the function $\varphi_j$ produced with MFEM library. On the left picture we plot the function $\varphi_j$. On the right picture we show the same plot depicting the elements of the underlying triangulation. Visualization: \cite{Glvis}.}
    \label{fig:ShapeFunc}
\end{figure}
\newpage
With this, $V_h=\mbox{span}\{\varphi_i:i=1,\dots,M\}$ and for any given $v\in V_h$ we have $v(x)=\sum_{j=1}^M\xi_j\varphi_j(x),$ with $\xi_j=v(N_j)$ and $x\in\Omega\cup\Gamma$. So, $V_h$ is a finite-dimensional subspace of $V$. See \cite{Lagrange} for details. \\
\newline
Then, if $p_h\in V_h$ satisfies \eqref{Laplace3} for all $v\in V_h$, in particular,
\begin{equation}\label{LaplaceD1}
    \int_\Omega\nabla p_h\cdot\nabla \varphi_j=\int_\Omega f\ \varphi_j,\ \ j=1,\dots,M.
\end{equation}
Since $\nabla p_h=\sum_{i=1}^M\xi_i\nabla\varphi_i$ with $\xi_i=p_h(N_i)$, replacing on \eqref{LaplaceD1} we get,
\begin{equation}\label{LaplaceD2}
    \sum_{i=1}^M\xi_i\int_\Omega\nabla\varphi_i\cdot\nabla\varphi_j=\int_\Omega f\ \varphi_j,\ \ j=1,\dots,M.
\end{equation}
Finally, \eqref{LaplaceD2} is a linear system of $M$ equations and $M$ unknowns ($\xi_1,\dots,\xi_M$), which can be written as,
\begin{equation}\label{LaplaceEnd}
    A\xi=b,
\end{equation}
where $A[i,j]=\int_\Omega\nabla\varphi_i\cdot\nabla\varphi_j$, $\xi[i]=p_h(N_i)$ and $b[i]=\int_\Omega f\ \varphi_i$.\\
\newline
We can solve \eqref{LaplaceEnd} with MFEM library as done on Section \ref{sec:Vs} (Example\#1). Before continuing with the next section, let us show some theorems regarding the error between the solution $p$ for problem \eqref{Laplace} and its approximation $p_h$. The theorems are presented on the general form but, after the proof, we show how is it used on our particular problem.\enter
For the following theorem, $A$ is a bilinear form on $V\times V$ and $L$ is a linear form on $V$ such that 
\begin{enumerate}
    \item \textbf{$A$ is continuous} ($\mathcal{C}$)\\ There is a constant $\gamma>0$ such that $$|A(v,w)|\leq\gamma||v||_V||w||_V,\ \forall v,w\in V.$$
    \item \textbf{$A$ is $V$-elliptic} ($V_\epsilon$)\\ There is a constant $\alpha>0$ such that $$\alpha||v||_V^2\leq A(v,v).$$
\end{enumerate}
\begin{theorem}\label{theo:cea}\cite{Lagrange}\ \textbf{C\'ea Lemma}\\
If $p\in V$ is the solution for $$A(p,v)=L(v),\ \forall v\in V$$
and $p_h\in V_h\subset V$ is the solution for $$A(p_h,v_h)=L(v_h),\ \forall v_h\in V_h$$
then, $$||p-p_h||_V\leq\frac{\gamma}{\alpha}||p-v_h||_V,\ \forall v_h\in V_h.$$
\end{theorem}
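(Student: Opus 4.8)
The plan is to exploit the nesting $V_h \subset V$, which lets us test the continuous equation against functions from the discrete space and thereby obtain the so-called Galerkin orthogonality. First I would substitute $v = v_h \in V_h$ (legitimate precisely because $V_h \subset V$) into the continuous problem $A(p,v) = L(v)$ to get $A(p, v_h) = L(v_h)$ for every $v_h \in V_h$. Subtracting the discrete equation $A(p_h, v_h) = L(v_h)$ from this and using bilinearity of $A$ yields
\begin{equation*}
    A(p - p_h, v_h) = 0, \quad \forall v_h \in V_h.
\end{equation*}
This identity, which says that the error $p - p_h$ is $A$-orthogonal to the whole discrete space, is the engine of the argument.

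Next I would bound the error from below using $V$-ellipticity $(V_\epsilon)$, namely $\alpha||p - p_h||_V^2 \leq A(p - p_h, p - p_h)$. The key algebraic step is then to split the second argument as $p - p_h = (p - v_h) + (v_h - p_h)$ for an arbitrary $v_h \in V_h$; the crucial observation is that $v_h - p_h \in V_h$, since both summands lie in $V_h$. Applying bilinearity and then the Galerkin orthogonality to annihilate the term $A(p - p_h, v_h - p_h) = 0$, I am left with $\alpha||p - p_h||_V^2 \leq A(p - p_h, p - v_h)$. Continuity $(\mathcal{C})$ now bounds the right-hand side by $\gamma||p - p_h||_V\,||p - v_h||_V$.

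Finally, if $||p - p_h||_V = 0$ the claimed inequality is trivial; otherwise I divide both sides by $||p - p_h||_V$ to obtain $\alpha||p - p_h||_V \leq \gamma||p - v_h||_V$, and dividing by $\alpha$ gives exactly the desired bound $||p - p_h||_V \leq \frac{\gamma}{\alpha}||p - v_h||_V$ for every $v_h \in V_h$.

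As for the main obstacle: the computation is routine once the orthogonality is in hand, so the real conceptual content is recognizing that $V_h \subset V$ is what permits testing the continuous equation against discrete functions, and that the decomposition must be chosen so that the cross term lands inside $V_h$ and is killed by orthogonality. Everything else (ellipticity from below, continuity from above, a single division) is mechanical. I would also note that no existence or uniqueness argument is needed, since those are built into the hypotheses, so the lemma reduces to this short chain of inequalities glued together by Galerkin orthogonality.
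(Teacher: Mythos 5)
Your proof is correct and follows essentially the same route as the paper's: Galerkin orthogonality obtained by testing the continuous problem with functions in $V_h\subset V$, ellipticity from below, the decomposition that places $v_h-p_h$ inside $V_h$ so orthogonality kills the cross term, and continuity from above. If anything, your explicit handling of the degenerate case $||p-p_h||_V=0$ is cleaner than the paper's, which merely asserts that this norm is nonzero because $p_h$ is ``an approximation, not the exact solution.''
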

\begin{proof}
Using the hypothesis that $A(p,v)=L(v)$ for all $v\in V$, along with the fact that $V_h\subset V$, we have that $A(p,v_h)=L(v_h)$ for all $v_h\in V_h$. Now, we subtract the last equation with the one given as hypothesis, $A(p_h,v_h)=L(v_h)$, to get, $A(p-p_h,v_h)=L(v_h)-L(v_h)=0$ for all $v_h\in V_h$. \enter
For an arbitrary $w\in V_h$, let $v_h=p_h-w\in V_h$. Then, 
\begin{equation*}
    \begin{split}
        &\alpha||p-p_h||^2_V\\
        \leq&A(p-p_h,p-p_h)+0\quad(\text{See } V_\epsilon)\\
        =&A(p-p_h,p-p_h)+A(p-p_h,w)\\
        =&A(p-p_h,p-p_h+w)\\
        =&A(p-p_h,p-p_h+p_h-v_h)\\
        =&A(p-p_h,p-v_h)\\
        \leq&\gamma||p-p_h||_V||p-v_h||_V\quad(\text{See } \mathcal{C}).
    \end{split}
\end{equation*}
In other words, we have that $\alpha||p-p_h||^2_V\leq\gamma||p-p_h||_V||p-v_h||_V$ for all $v_h\in V_h$. Dividing by $\alpha||p-p_h||_V$ on both sides, we get, $||p-p_h||_V\leq\frac{\gamma}{\alpha}||p-v_h||_V$ for all $v_h\in V_h.$ Note that $||p-p_h||_V\not=0$ because $p_h$ is supposed to be an approximation for $p$, and not the exact solution.
\end{proof}
On the particular case of \eqref{Laplace3}, we have that $A(p,v)=\int_\Omega\nabla v\cdot\nabla p$ and $L(v)=\int_\Omega f\ v.$ Using Cauchy-Schwarz Inequality for Integrals we have that 
$$A(p,v)=\int_\Omega\nabla v\cdot\nabla p\leq\sqrt{\int_\Omega|\nabla p|^2}\cdot\sqrt{\int_\Omega|\nabla v|^2}=1\cdot||p||_V||v||_V.$$
In other words, the parameter for the continuity of the bilinear form of our particular case is $\gamma=1$. Also, notice that $$1\cdot||v||_V^2=\left(\sqrt{\int_\Omega|\nabla v|^2}\right)^2=\int_\Omega|\nabla v|^2=\int_\Omega\nabla v\nabla v=A(v,v).$$ That is, the parameter for the $V$-ellipticity of the bilinear form for our particular case is $\alpha=1$. Therefore, C\'ea Lemma ensures that $\sqrt{\int_\Omega|p-p_h|^2}\leq\sqrt{\int_\Omega|p-v_h|^2}$ for all $v_h\in V_h$.\enter
Now, before presenting the second theorem, define the operator $\mathcal{I}^h:\mathcal{C}(\Omega)\rightarrow V_h$ that associates every continuous function whose domain is $\Omega$, $f\in\mathcal{C}(\Omega)$, with a function $\mathcal{I}^hf\in V_h$ \cite{Galvis}. This operator is an interpolation operator defined by the nodes of the triangulation of $\Omega$: if $T_h=\{K_1,\dots,K_m\}$ is a triangulation of $\Omega$, then $\mathcal{I}^hf(K_i)=f(K_i),\ i=1,\dots,m$.
\begin{theorem}\label{theo:errorLagrange}\cite{Galvis}
Let $\Omega\subseteq\mathbb{R}^2$ be a polygonal domain. Let $\{T_{h_i}\}$ be a family of triangulations of $\Omega$, with $T_{h_i}$ being a quasi-uniform triangulation. Then, $$||\mathcal{I}^hp-p||_1\leq c\cdot h\cdot||p||_2,$$
where $$||f||_2=\left(\int_\Omega f(x)^2+|\nabla f(x)|^2+\sum_{ij}(\partial_{ij}f)^2dx\right)^{1/2}.$$
\end{theorem}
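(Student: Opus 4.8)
The plan is to reduce the global estimate to a sum of local estimates over the triangles of $T_h$, prove a scale-invariant bound on a single fixed reference triangle, and then transfer that bound back to each element by an affine change of variables while carefully tracking how the norms scale with the element size. Since both the full $H^1$ norm and its seminorm decompose as sums over the elements, it suffices to bound $\|\mathcal{I}^h p - p\|_{1,K}$ on each $K \in T_h$ and afterwards sum the squares. I would write $\|\cdot\|_1$ and $\|\cdot\|_2$ for the $H^1$ and $H^2$ norms and use $|\cdot|_2$ for the seminorm built only from second derivatives.

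First I would fix a reference triangle $\hat{K}$ and, for each $K \in T_h$, write the affine map $F_K(\hat{x}) = B_K \hat{x} + b_K$ with $F_K(\hat{K}) = K$. The crucial algebraic fact is that nodal interpolation commutes with this pullback: if $\hat{p} = p \circ F_K$, then $\widehat{\mathcal{I}^h p} = \hat{\mathcal{I}} \hat{p}$, because an affine map carries vertices to vertices and the local interpolant reproduces affine functions. Consequently, on the reference element the error operator $E := \hat{\mathcal{I}} - \mathrm{id}$ is a bounded linear map from $H^2(\hat{K})$ into $H^1(\hat{K})$ that annihilates $\mathcal{P}_1(\hat{K})$, the space of affine polynomials.

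The heart of the argument, and the step I expect to be the main obstacle, is a Bramble--Hilbert estimate for $E$ on $\hat{K}$: because $E$ vanishes on $\mathcal{P}_1$, one obtains $\|\hat{\mathcal{I}}\hat{p} - \hat{p}\|_{1,\hat{K}} \le \hat{C}\, |\hat{p}|_{2,\hat{K}}$, where $\hat{C}$ depends only on $\hat{K}$. I would establish this by a compactness (Rellich) argument: on the quotient space $H^2(\hat{K})/\mathcal{P}_1$ the seminorm $|\cdot|_2$ is equivalent to the quotient norm, and since $E$ factors through that quotient, its action is controlled by $|\hat{p}|_2$ alone. This passage from the polynomial-annihilation property to a seminorm bound is the genuinely nontrivial ingredient.

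Finally I would scale back to $K$. The standard estimates for affine pullbacks give, for each integer $m$, $|v|_{m,K} \le C \|B_K^{-1}\|^{m} |\det B_K|^{1/2} |\hat{v}|_{m,\hat{K}}$ together with the reverse inequality obtained by interchanging $B_K$ and $B_K^{-1}$. Combining these with the reference estimate yields $|\mathcal{I}^h p - p|_{1,K} \le C \|B_K\|^{2} \|B_K^{-1}\| \, |p|_{2,K}$. Here quasi-uniformity enters decisively: shape regularity bounds $\|B_K\| \le C h_K$ and $\|B_K^{-1}\| \le C \rho_K^{-1}$ with $h_K/\rho_K$ uniformly bounded, so $\|B_K\|^{2} \|B_K^{-1}\| \le C h_K \le C h$. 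Squaring, summing over $K \in T_h$, and using $\sum_K |p|_{2,K}^2 = |p|_{2,\Omega}^2 \le \|p\|_2^2$ gives $\|\mathcal{I}^h p - p\|_1 \le c\, h\, \|p\|_2$, as claimed. The delicate points to watch are the well-definedness of the interpolant, which requires the embedding $p \in H^2 \hookrightarrow \mathcal{C}(\Omega)$ valid in two dimensions, and the bookkeeping of the powers of $h_K$ so that exactly one factor of $h$ survives.
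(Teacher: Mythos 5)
Your proposal cannot be checked against the paper's own argument for the simple reason that the paper does not give one: immediately after the statement, the paper declares the proof ``out of the scope for this work'' and defers entirely to the cited reference \cite{Galvis}. What you have supplied is the standard Ciarlet-style proof --- element-by-element decomposition, affine pullback to a reference triangle, a Bramble--Hilbert (Deny--Lions) estimate exploiting that the local error operator annihilates $\mathcal{P}_1$, and scaling estimates combined with shape regularity to extract exactly one factor of $h$ --- and this is indeed the kind of argument found in the reference the paper points to, so your route fills the gap the paper leaves open rather than diverging from anything in it. The outline is correct; two points of bookkeeping deserve tightening. First, your final combination controls only the seminorm $|\mathcal{I}^h p - p|_{1,K}$, whereas the claim requires the full local $H^1$ norm; the missing $L^2$ piece scales as $\|B_K\|^2 \sim h_K^2$, i.e.\ even better than the gradient term, so the conclusion survives, but the step should be stated. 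Second, you rightly flag that the nodal interpolant is only defined thanks to the embedding $H^2(\Omega) \hookrightarrow C^0(\overline{\Omega})$, valid in two dimensions --- this is precisely the hypothesis-sensitive point (it fails for $H^1$ functions, and in higher dimensions for low regularity) that justifies why the paper's statement restricts to $\Omega \subseteq \mathbb{R}^2$ and measures $p$ in the $\|\cdot\|_2$ norm, and your proof makes that dependence explicit where the paper's citation hides it.
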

The proof of this theorem is out of the scope for this work. However, it can be checked on \cite{Galvis}. In summary, for our case, C\'ea Lemma states that Laplace problem can be approximated by the space $V_h$, and Theorem \ref{theo:errorLagrange} states that the approximation is a good one.

\subsubsection{Lagrange spaces of higher order}\label{sec:HighOrder}
This short section has the purpose of explaining Lagrange finite element spaces of higher order. Previously, on Section \ref{sec:Lagrange}, when introducing Lagrangian elements, the shape function's degree was set to one. Better approximations can be obtained by using polynomials of higher order. One can define, for a fixed order $k$, 
\begin{equation*}
\begin{split}
    V^k_h=\{v:&v \text{ continuous on }\Omega,\\ &v|_K \text{ polynomial of order at most }k,K\in T_h,\ v=0\text{ on }\Gamma\}.
\end{split}
\end{equation*}

For example, as seen in \cite{Galvis}, the space of Bell triangular finite elements for a given triangulation $T_h$ is the space of functions that are polynomials of order 5 when restricted to every triangle $K\in T_h$. That is, if $v$ is in this space, then,
\begin{equation*}
    v|_K(x,y)=a_1x^5+a_2y^5+a_3x^4y+a_4xy^4+\dots+a_{16}x+a_{17}y+a_{18}
\end{equation*}
for all $K\in T_h$. Here, the constants $a_i,\ i=1,\dots,18$ correspond to $v$'s \textit{DOF} (degrees of freedom).\\
\newline
On Figures \ref{fig:HighOrderFunc} and \ref{fig:HighOrderFunc2}, we present a visualization of some shape functions of different orders. We encourage the reader to compare them with Figure \ref{fig:ShapeFunc} and notice the degree of the polynomial in the nonzero part of the shape functions.
\begin{figure}[h!]
    \centering
    \includegraphics[scale=0.3]{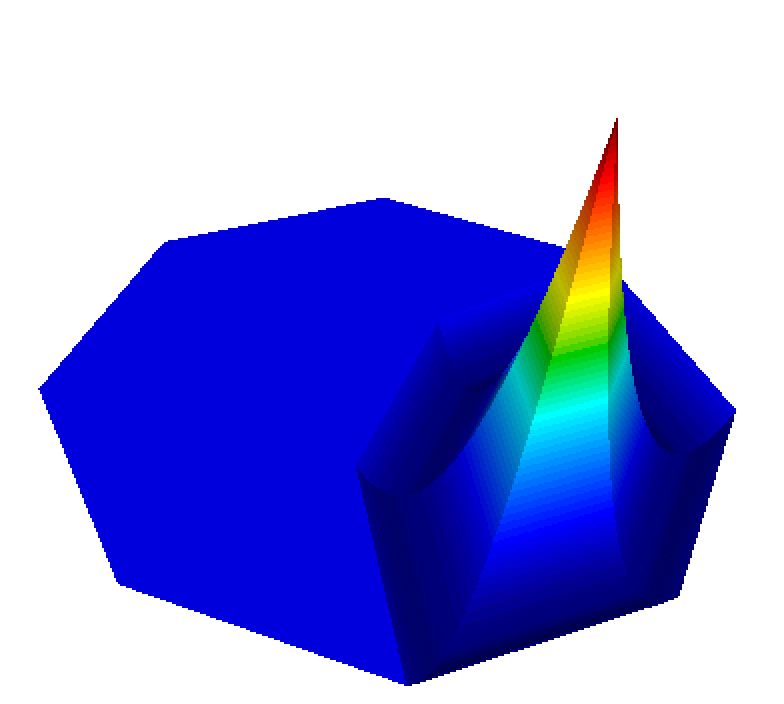}
    \includegraphics[scale=0.28]{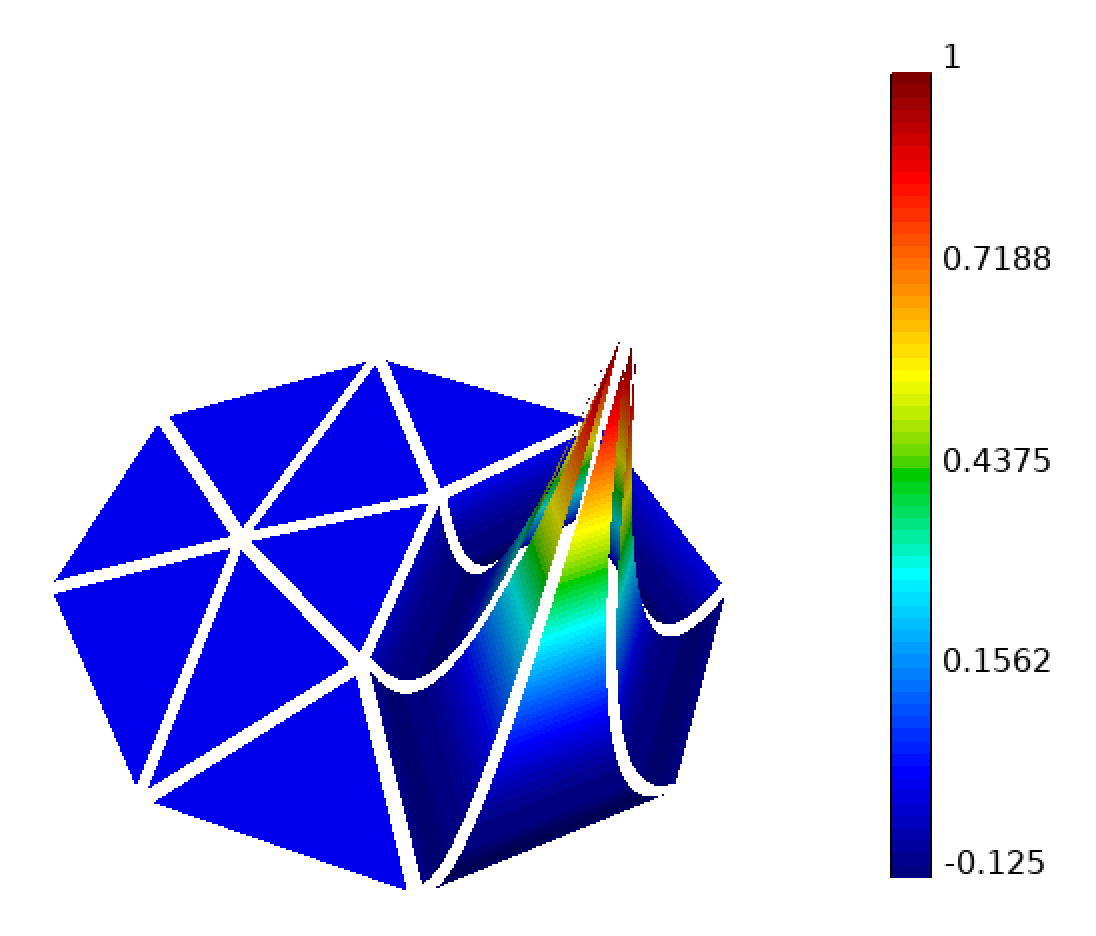}
    \caption{Illustration of finite element basis (shape) functions of order 2. On the left picture we show one continuous basis function. On the right we also show the underlying triangulation. Visualization: \cite{Glvis}.}
    \label{fig:HighOrderFunc}
\end{figure}
\begin{figure}[h!]
    \centering
    \includegraphics[scale=0.35]{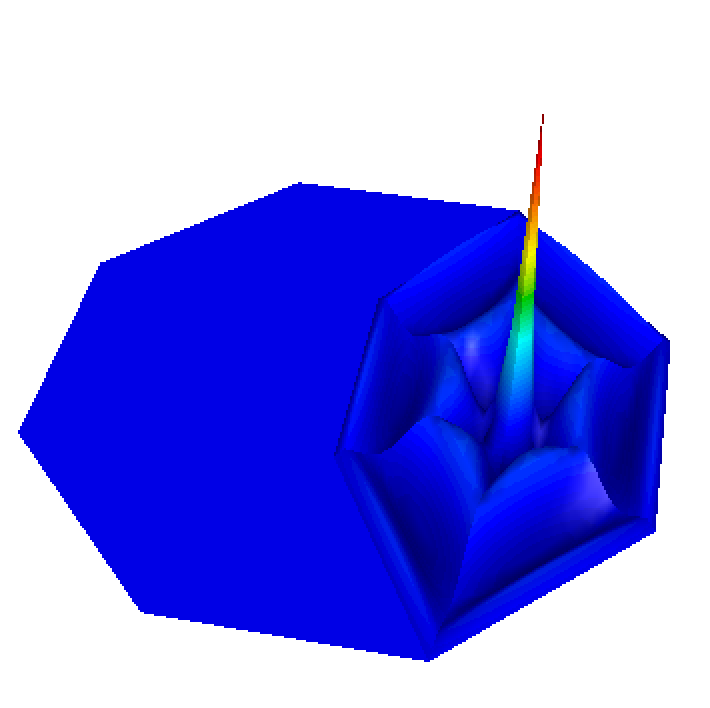}
    \includegraphics[scale=0.3]{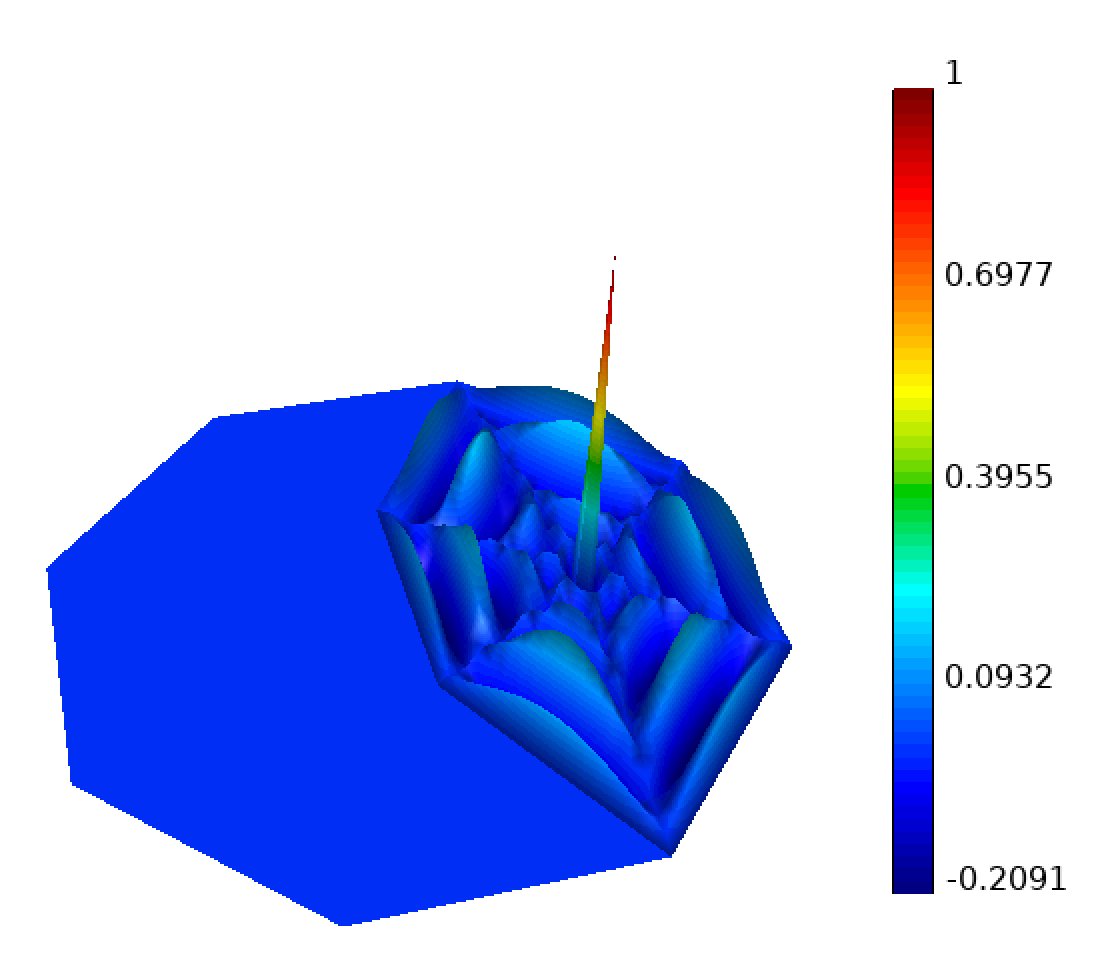}
    \caption{Illustration of finite elements basis function of orders 5 (left) and 10 (right). Visualization: \cite{Glvis}.}
    \label{fig:HighOrderFunc2}
\end{figure}

\newpage

\subsubsection{Raviart-Thomas finite elements}\label{sec:Mixed}
First, let's define some important spaces, where $\Omega$ is a bounded domain in $\mathbb{R}^2$ and $\Gamma$ its boundary. See \cites{Lagrange,Mixed, Galvis} and references therein for details. 
The space of all square integrable functions,
\begin{equation*}
    L^2(\Omega)=\{v:\Omega\rightarrow\mathbb{R}\ \Big|\int_\Omega v^2<\infty\}.
\end{equation*}
We also use the first order Sobolev space, 
\begin{equation*}
    H^1(\Omega)=\{v\in L^2(\Omega)\ \Big|\ \frac{\partial v}{\partial x},\frac{\partial v}{\partial y}\in L^2(\Omega)\}
\end{equation*}
and the subspace of $H^1(\Omega)$ of functions with vanishing value on the boundary, 
\begin{equation*}
    H_0^1(\Omega)=\{v\in H^1(\Omega)\ |\ v=0\ on\ \Gamma\}.
\end{equation*}
We also introduce the  space of square integrable vector functions with square integrable divergence, 
\begin{equation*}
    H(\mbox{div};\Omega)=\{\mathbf{v}\in L^2(\Omega)\times L^2(\Omega)\ |\ \mbox{div}(\mathbf{v})\in L^2(\Omega)\}.
\end{equation*}
As above, let $\Omega\in\mathbb{R}^2$ be a bounded domain with boundary $\Gamma$ and consider problem \eqref{Laplace}. This time we require explicitly that 
 $f\in L^2(\Omega)$. Recall (from Section \ref{sec:Lagrange}) that this problem can be reduced to
\begin{equation*}
\int_\Omega\nabla v\cdot\nabla p=\int_\Omega f\ v,\text{ for all $v\in V$},
\end{equation*}
where Dirichlet boundary condition ($p=0\ in\ \Gamma$) is essential. Recall that 
we can take  $V=H_0^1(\Omega)$ as seen in \cite{Lagrange, Mixed}.\\

Let $u=\nabla p$ in $\Omega$. Then, problem \eqref{Laplace} can be written as the following system of fist order  partial differential equations,
\begin{equation}\label{LaplaceMixed}
    \begin{split}
    u&=\nabla p\text{ in }\Omega\\  
    \mbox{div}(u)&=-f\text{ in } \Omega\\  
        p&=0\text{ in }\Gamma,
    \end{split}
\end{equation}
because $\Delta p=\mbox{div}(\nabla p)$.
Now, following a similar procedure as in Section \ref{sec:Lagrange}, multiply the first equation of \eqref{LaplaceMixed} by some $\mathbf{v}\in H(\mbox{div};\Omega)$ and integrate both sides to obtain,
\begin{equation}\label{LaplaceMixed2}
    \int_\Omega u\ \mathbf{v}=\int_\Omega\nabla p\cdot\mathbf{v}.
\end{equation}
Consider Green's identity \cite{Mixed},
\begin{equation}\label{Green2}
    \int_\Omega \mathbf{v}\cdot\nabla p+\int_\Omega p\ \mbox{div}(\mathbf{v})=\int_\Gamma(\mathbf{v}\cdot\eta) p,
\end{equation}
where $\eta$ is the normal vector exterior to $\Gamma$.\\
\newline
Replacing \eqref{Green2} in \eqref{LaplaceMixed2}, and considering the third equation of \eqref{LaplaceMixed}, we get,
\begin{equation}\label{LaplaceMixed3}
    \int_\Omega u\ \mathbf{v}+\int_\Omega p\ \mbox{div}(\mathbf{v})=\int_\Gamma(\mathbf{v}\cdot\eta) p,
\end{equation}
On the other hand, we can multiply the second equation of problem \eqref{LaplaceMixed} by some $w \in L^2(\Omega)$, integrate and obtain,
\begin{equation}\label{LaplaceMixed4}
    \int_\Omega w\ \mbox{div}(u)=-\int_\Omega f\ w.
\end{equation}
Note that the boundary integral depends directly on the value of $p$ in $\Gamma$. And, this is referred to as the case of a \textbf{natural} boundary condition. Observe that the same boundary condition appeared as an essential boundary condition in the second order formulation considered before (Section \ref{sec:Lagrange}). In this first order formulation it showed up as a natural boundary condition. \\

Finally, applying boundary condition $p=0\ \text{in }\Gamma$ into \eqref{LaplaceMixed3}, and joining \eqref{LaplaceMixed3} and \eqref{LaplaceMixed4}. We get the following problem deduced from \eqref{LaplaceMixed},
\begin{equation}\label{LaplaceMixedVar}
    \begin{split}
        &\int_\Omega u\ \mathbf{v}+\int_\Omega p\ \mbox{div}(\mathbf{v})=0\\
        &\int_\Omega w\ \mbox{div}(u)=-\int_\Omega f\ w.
    \end{split}
\end{equation}
For this problem, which is  a variational formulation of \eqref{LaplaceMixed}, the objective is to find $(u,p)\in H(\mbox{div};\Omega)\times L^2(\Omega)$ such that it is satisfied for all $\mathbf{v}\in H(\mbox{div};\Omega)$ and all $w\in L^2(\Omega)$.\enter
For the discretized problem related to \eqref{LaplaceMixedVar}, in \cite{Mixed} the following spaces are defined for a  \textit{triangulation} $T_h$ of the domain $\Omega$ and a fixed integer $k\geq0$,
\begin{equation*}
    \begin{split}
        &H_h^k:=\{\mathbf{v_h}\in H(\mbox{div};\Omega)\ |\ \mathbf{v_h}|_K\in RT_k(K)\text{ for all }K\in T_h\},\text{ and}\\
        &L_h^k:=\{w_h\in L^2(\Omega)\ |\ w_h|_K\in \mathbb{P}_k(K)\text{ for all }K\in T_h\},
    \end{split}
\end{equation*}
where 
\begin{equation*}
    \begin{split}
        &\mathbb{P}_k(K)=\{p:K\rightarrow\mathbb{R}\ |\ p\text{ is a polynomial of degree }\leq k\},\text{ and}\\
        &RT_k(K)=[\mathbb{P}_k(K)\times\mathbb{P}_k(K)]+\mathbb{P}_k(K)x.
    \end{split}
\end{equation*}
Note that $\mathbf{p}\in RT_k(K)$ if and only if there are some $p_0,p_1,p_2\in\mathbb{P}_k(K)$ such that
\begin{equation*}
    \mathbf{p}(x)=\begin{pmatrix}
    p_1(x)\\p_2(x)
    \end{pmatrix}
    +p_0(x)\begin{pmatrix}
    x\\y
    \end{pmatrix}
    \text{ for all }\begin{pmatrix}
    x\\y
    \end{pmatrix}\in K,
\end{equation*}
and, also note that $\mathbf{p}$ has a degree of $k+1$.\\
\newline
Then, \eqref{LaplaceMixedVar} gives the following discrete problem: find $(u_h,p_h)\in H_h^k\times L_h^k$ such that
\begin{equation}\label{LaplaceMixedVar2}
    \begin{split}
        &\int_\Omega u_h\ \mathbf{v}_h+\int_\Omega p_h\ \mbox{div}(\mathbf{v}_h)=0\\
        &\int_\Omega w_h\ \mbox{div}(u_h)=-\int_\Omega f\ w_h,
    \end{split}
\end{equation}
for all $\mathbf{v}_h\in H_h^k$ and all $w_h\in L_h^k$.\\
\newline
As spaces $H_h^k$ and $L_h^k$ are finite dimensional, they have a finite basis. That is, $H_h^k=\mbox{span}\{\varphi_i:i=1,\dots,M\}$ and $L_h^k=\mbox{span}\{\psi_j:j=1,\dots,N\}$. Then, $u_h=\sum_{i=1}^Mu_i\varphi_i$ and $p_h=\sum_{j=1}^Np_j\psi_j$, where $u_i$ and $p_j$ are scalars.
\newpage
In particular, as $\varphi_k\in H_h^k$ and $\psi_l\in L_h^k$, we have that problem \eqref{LaplaceMixedVar2} can be written as,
\begin{equation}\label{LaplaceMixedVar3}
    \begin{split}
    &\int_\Omega\left(\sum_{i=1}^Mu_i\varphi_i\right)\varphi_k+\int_\Omega\left(\sum_{j=1}^Np_j\psi_j\right)\mbox{div}(\varphi_k)=0\\
    &\int_\Omega\psi_l\mbox{div}\left(\sum_{i=1}^Mu_i\varphi_i\right)=\int_\Omega f\psi_l,
    \end{split}
\end{equation}
for $k=1,\dots,M$ and $l=1,\dots,N$. Which is equivalent to the following, by rearranging scalars,
\begin{equation}\label{LaplaceMixedVar4}
    \begin{split}
    &\sum_{i=1}^Mu_i\int_\Omega\varphi_i\cdot\varphi_k+\sum_{j=1}^Np_j\int_\Omega\psi_j\mbox{div}(\varphi_k)=0\\
    &\sum_{i=1}^Mu_i\int_\Omega\psi_l\mbox{div}(\varphi_i)=\int_\Omega f\psi_l,
    \end{split}
\end{equation}
for $k=1,\dots,M$ and $l=1,\dots,N$. The problem \eqref{LaplaceMixedVar4} can be formulated into the following matrix system
\begin{equation}\label{LaplaceMixedEnd}
    \begin{pmatrix}
    A & B\\
    B^t & 0
    \end{pmatrix}\begin{pmatrix}
    U\\
    P
    \end{pmatrix}=\begin{pmatrix}
    0\\
    F
    \end{pmatrix},
\end{equation}
where $A$ is a $N\times N$ matrix, $B$ is a $M\times N$ matrix with $B^t$ it's transpose, $U$ is a $M$-dimensional column vector and $P,F$ are $N$-dimensional column vectors. \\
The entries of these arrays are $A[i,j]=\int_\Omega\varphi_i\cdot\varphi_j$, $B[i,j]=\int_\Omega\psi_j\mbox{div}(\varphi_i)$, $U[i]=u_i$, $P[i]=p_i$ and $F[i]=\int_\Omega f\psi_i$.\\
\newline 
The linear system \eqref{LaplaceMixedEnd} can be solved for $(U,P)$ with a computer using MFEM library. Note that with the entries of $U$ and $P$, the solution $(u_h,p_h)$ of \eqref{LaplaceMixedVar2} can be computed by their basis representation.\\

 The spaces defined to discretize the problem are called Raviart-Thomas finite element spaces. The fixed integer $k$ is also called the order of the shape functions or the order of the finite element space. The parameter $h$ is the same as in Section \ref{sec:Lagrange}, which is a measure of size for $T_h$. 
 See \cite{Mixed} for details.

\newpage

\subsubsection{Taylor-Hood finite elements}\label{sec:NSFEM}
In this section, we show the spatial discretization done in \cite{TH}, for Stokes equations \eqref{eq:THStokes},
\begin{equation}\label{eq:THStokes}
    \begin{split}
        \frac{\partial u}{\partial t}-\nu\Delta u+\nabla p=f,\text{ in }\Omega,\\
        \nabla\cdot u = 0,\text{ in }\Omega,\\
        u=g,\text{ in } \Gamma,
    \end{split}
\end{equation}
and then mention the corresponding spatial discretization for Navier-Stokes equations \eqref{eq:THNS}, as an extension of the previous one,
\begin{equation}\label{eq:THNS}
    \begin{split}
        \frac{\partial u}{\partial t}+(u\cdot\nabla)u-\nu\Delta u+\nabla p=f,\text{ in }\Omega,\\
        \nabla\cdot u = 0,\text{ in }\Omega,\\
        u=g,\text{ in } \Gamma.
    \end{split}
\end{equation}
When applying a numerical method to solve both systems of equations, \eqref{eq:THStokes} and \eqref{eq:THNS}, time has to be discretized too. However, time discretization is out of the scope of this paper (it can be found on section 4 of \cite{TH}).\enter
First of all, let $T_h=\{K_1,\dots,K_m\}$ be a \textit{discretization} of the domain $\Omega$. That is, $\Omega=K_1\cup\dots\cup K_m$, and $K_1,\dots,K_m$ don't overlap between them, and no vertex of one of them lies on the edge of another (check triangulation on Section \ref{sec:Lagrange}). If $\Omega$ is in 2D, $K_i$ is a quadrilateral, and if $\Omega$ is in 3D, $K_i$ is a a hexahedron. Then, define the following finite element function spaces on $T_h$, where $d\in\{2,3\}$ is the dimension of $\Omega$ \cite{TH}.
\begin{equation*}
    \begin{split}
        &U_h^k = \{v\in (H^1(\Omega))^d\ :\ v(K)\in(\mathcal{Q}_k(K))^d\text{ for all }K\in T_h\},\\
        &P_h^k = \{s\in H^1(\Omega)\ :\ s(K)\in\mathcal{Q}_k(K)\text{ for all }K\in T_h\},
    \end{split}
\end{equation*}
where $\mathcal{Q}_k(K)$ is the set of all polynomials whose degree on each of their variables is less or equal than $k$, with domain $K$. For example, if $K$ is two dimensional, $\mathcal{Q}_2(K)=\{a_0+a_1x+a_2y+a_3x^2+a_4y^2+a_5xy+a_6x^2y+a_7xy^2+a_8x^2y^2:a_i\in\mathbb{R},\ i=1,\dots,8\}$. Notice that the total degree of $a_8x^2y^2$ is $4$, but the degree on each variable ($x$ or $y$) is just $2$, as desired.\enter
As done on previous sections, multiply the equations of \eqref{eq:THStokes} by some test function $v\in U_h^k$ and $s\in P_h^k$, respectively, to obtain
\begin{equation}\label{THDeduction1}
    \begin{split}
        &\int_\Omega\frac{\partial u}{\partial t}v-\nu\int_\Omega\Delta u\ v+\int_\Omega\nabla p\cdot v=\int_\Omega fv,\\
        &\int_\Omega\left(\nabla\cdot u\right)s = 0.
    \end{split}
\end{equation}
Now, applying Green's formula \eqref{Green} with homogeneous boundary condition ($u=g=0$ in $\Gamma$), as done on Section \ref{sec:Lagrange}, the term $-\nu\int_\Omega\Delta u\ v$ becomes $\nu\int_\Omega\nabla u\cdot\nabla v$. Also, it is usual to multiply the second equation by $-1$, therefore, \eqref{THDeduction1} becomes the finite element formulation \eqref{THDeduction2} found on \cite{TH}. The idea is to find $(u,p)\in(U_h^k,P_h^k)$, for all $(v,s)\in(U_h^k,P_h^k)$, such that
\begin{equation}\label{THDeduction2}
    \begin{split}
        &\int_\Omega\frac{\partial u}{\partial t}v-\nu\int_\Omega\nabla u\cdot\nabla v+\int_\Omega\nabla p\cdot v=\int_\Omega fv,\\
        &-\int_\Omega\left(\nabla\cdot u\right)s = 0.
    \end{split}
\end{equation}
Let $\{\phi_i:i=1,\dots,n\}$ be a basis for $U_h^k$ and $\{\psi_j:j=1,\dots,m\}$ be a basis for $P_h^k$. Therefore, $$u(x,t)=\sum_{i=1}^nu_i(t)\phi_i(x)$$ and $$p(x,t)=\sum_{j=1}^mp_j(t)\psi_j(x)$$ where $u_i$ and $p_j$ are functions depending only on $t$. Replacing these representations on \eqref{THDeduction2} and noticing that $(\phi_i,\psi_j)\in(U_h^k,P_h^k)$, we get the system of equations \eqref{THDeduction3}, with $I=1,\dots,n$ and $J=1,\dots,m$.
\begin{equation}\label{THDeduction3}
    \begin{split}
        &\int_\Omega\frac{\partial \left(\sum_{i=1}^nu_i\phi_i\right)}{\partial t}\phi_I-\nu\int_\Omega\nabla \left(\sum_{i=1}^nu_i\phi_i\right)\cdot\nabla \phi_I\\+&\int_\Omega\nabla\left(\sum_{j=1}^mp_j\psi_j\right)\cdot\phi_I=\int_\Omega f\phi_I,\\
        &-\int_\Omega\left(\nabla\cdot \left(\sum_{i=1}^nu_i\phi_i\right)\right)\psi_J = 0.
    \end{split}
\end{equation}
After rearranging scalars, using properties of the dot product and the operator $\nabla$, and noting that $\phi_i$ does not depend on $t$, \eqref{THDeduction3} can be formulated as \eqref{THDeduction4}.
\begin{equation}\label{THDeduction4}
    \begin{split}
        &\int_\Omega\left(\sum_{i=1}^n\phi_i\frac{\partial u_i}{\partial t}\right)\phi_I-\nu\int_\Omega\left(\sum_{i=1}^nu_i\nabla \phi_i\right)\cdot\nabla \phi_I\\+&\int_\Omega\left(\sum_{j=1}^mp_j\nabla\psi_j\right)\cdot\phi_I=\int_\Omega f\phi_I,\ \ I=1,\dots,n\\
        &-\int_\Omega\left(\sum_{i=1}^nu_i\nabla\cdot \phi_i\right)\psi_J = 0,\ J=1,\dots,m.
    \end{split}
\end{equation}
Finally, \eqref{THDeduction4} can be formulated as \eqref{THDeduction5} after swapping integrals with summations and rearranging integration scalars.
\begin{equation}\label{THDeduction5}
    \begin{split}
        &\sum_{i=1}^n\frac{\partial u_i}{\partial t}\int_\Omega\phi_i\phi_I-\sum_{i=1}^nu_i\int_\Omega\nu\nabla \phi_i\cdot\nabla \phi_I\\+&\sum_{j=1}^mp_j\int_\Omega\phi_I\cdot\nabla\psi_j=\int_\Omega f\phi_I,\ \ I=1,\dots,n\\
        &-\sum_{i=1}^nu_i\int_\Omega\psi_J\nabla\cdot \phi_i = 0,\ J=1,\dots,m.
    \end{split}
\end{equation}
As before, the problem can be reduced to the matrix system \eqref{THDeduction6}, which is the semi-discrete Stokes problem \cite{TH}.
\begin{equation}\label{THDeduction6}
\begin{split}
    M\dot{u}+Lu+Gp=f,\\
    -Du=0,
\end{split}
\end{equation}
where $M$ and $L$ are $n\times n$ matrices, $G$ is a $n\times m$ matrix, $D$ is a $m\times n$ matrix, $u$ and $f$ are $n$-dimensional vectors, $p$ is a $m$-dimensional vector and $\dot{u}$ is the notation used for the partial derivate of $u$ with respect to time $t$. The entries of these arrays are $M[i,j]=\int_\Omega\phi_i\phi_j$, $L[i,j]=\int_\Omega\nu\nabla\phi_i\cdot\nabla\phi_j$, $G[i,j]=\int_\Omega\phi_i\cdot\nabla\psi_j$, $D[i,j]=\int_\Omega\psi_i\nabla\cdot\phi_j$, $f[i]=\int_\Omega f\phi_i$, $p[i]=p_i$, $u[i]=u_i$ and $\dot{u}[i]=\frac{\partial u_i}{\partial t}$.\enter
As mentioned on \cite{TH}, for the steady Stokes problem, $\dot{u}=0$ is taken. In such case, \eqref{THDeduction6} becomes the linear matrix system \eqref{THDeduction7}.
\begin{equation}\label{THDeduction7}
    \begin{pmatrix}
    L & G\\
    -D & 0
    \end{pmatrix}\begin{pmatrix}
    u\\
    p
    \end{pmatrix}=\begin{pmatrix}
    f\\
    0
    \end{pmatrix}
\end{equation}
Furthermore, for the Navier-Stokes equations \eqref{eq:THNS}, the semi-discrete formulation is \cite{TH}:
\begin{equation}\label{THDeduction8}
\begin{split}
    M\dot{u}+Lu+\mathcal{N}(u)+Gp=f,\\
    -Du=0,
\end{split}
\end{equation}
where $$\mathcal{N}(u)[i]=\bigintss_\Omega\begin{pmatrix}
u_1 & \dots & u_n
\end{pmatrix}\begin{pmatrix}
(\phi_1\cdot\nabla)\phi_1 & \dots & (\phi_1\cdot\nabla)\phi_n\\
\vdots & \ddots & \vdots\\
(\phi_n\cdot\nabla)\phi_1 & \dots & (\phi_n\cdot\nabla)\phi_n\\
\end{pmatrix}\begin{pmatrix}
u_1\\
\vdots\\
u_n
\end{pmatrix}\phi_i$$
is the discretized nonlinear vector-convection term, of size $n\times n$.\enter
Finally, recall that the spaces $U_h^k$ and $P_h^k$ have a given order $k$. As mentioned on \cite{TH}, Taylor-Hood finite element space is the tuple $(U_h^k,P_{k-1})$, which is used to solve steady and unsteady Stokes problem (convergence is optimal and stable for $k\geq2$). And, for Navier-Stokes equations, the finite element space used is $(U_h^k,P_h^k)$, called $P_NP_N$ space.\enter
According to MFEM documentation \cite{MFEM}, the implementation for the solution of Navier-Stokes equations is done following \cite{TH}, which is the theory presented on this section.
\subsection{MFEM Library}\label{sec:MFEM}
In this manuscript,  we worked with MFEM's \textit{Example\#1} and \textit{Example\#5} which can be found on \cite{MFEM}. Example\#1 uses standard Lagrange finite elements and Example\#5 uses Raviart-Thomas mixed finite elements. Further, in Section \ref{sec:EquivProblem}, we find the parameters so that both problems are equivalent and then (Section \ref{sec:Results}), we compare the solutions.\enter
We finally mention that for a fair comparison between Lagrange and mixed finite element's approximation, Lagrange shape functions of order $k-1$ will be compared to the corresponding (mixed) approximation obtained by using $RT_k(K)$.\enter
Afterwards, we worked with MFEM's miniapp for solving Navier-Stokes equations, which corresponds to the experiments done on Section \ref{sec:NSExp}. On Section \ref{sec:NS2D} we worked in a 2-dimensional domain, and on Section \ref{sec:NS3D} we worked in a 3-dimensional domain.
\subsubsection{Information about the library}\label{sec:MFEMinfo}
According to it's official site \cite{MFEM}, MFEM is a free, lightweight, scalable C++ library for finite element methods that can work with arbitrary high-order finite element meshes and spaces.\\
\newline
MFEM has a serial and a parallel version. The serial version is the one recommended for beginners, and is used in Section \ref{sec:Vs}. On the other hand, the parallel version provides more computational power and enables the use of some MFEM \texttt{mini-apps}, like the Navier-Stokes mini app, which is used in Section \ref{sec:NSExp}. \\
\newline
Moreover, the Modular Finite Element Method (MFEM) library is developed by the MFEM Team at the Center for Applied Scientific Computing (CASC), located in the Lawrence Livermore National Laboratory (LLNL), under the BSD licence. However, as it is open source, the public repository can be found at \texttt{github.com/mfem} in order for anyone to contribute.
\enter
Also, since 2018, a wrapper for Python (PyMFEM) is being developed in order to use MFEM library among with Python code, which demonstrates the wide applicability that the library can achieve. And, in 2021, the first community workshop was hosted by the MFEM Team, which encourages the use of the library and enlarges the community of MFEM users.
\enter
Finally, take into account that the use of the library requires a good manage of C++ code, which is a programming language that's harder to use compared to other languages, such as Python. This understanding of C++ code is important because some parts of the library are not well documented yet, and, by checking the source code, the user may find a way of implementing what is required.
\subsubsection{Overview}\label{sec:MFEMoverview}
The main classes (with a brief and superficial explanation of them) that we are going to use in the code are:
\begin{itemize}
    \item Mesh: domain with the partition.
    \item FiniteElementSpace: space of functions defined on the finite element mesh.
    \item GridFunction: mesh with values (solutions).
    \item $\_$Coefficient: values of GridFunctions or constants.
    \item LinearForm: maps an input function to a vector for the rhs.
    \item BilinearForm: used to create a global sparse finite element matrix for the lhs.
    \item $\_$Vector: vector.
    \item $\_$Solver: algorithm for solution calculation.
    \item $\_$Integrator: evaluates the bilinear form on element's level.
    \item NavierSolver: class associated to the navier mini-app, which is used to solve Navier-Stokes equations.
\end{itemize}
The ones that have $\_$ are various classes whose name ends up the same and work similarly.\\
\newline
\textit{\underline{Note:}} \\\textbf{lhs}: left hand side of the linear system.\\
\textbf{rhs}: right hand side of the linear system.

\subsubsection{Code structure}\label{sec:CodeStruct}
A MFEM general code has the following steps (directly related classes with the step are written):
\begin{enumerate}
    \item Receive an input file (.msh) with the mesh and establish the order for the finite element spaces.
    \item Create a mesh object, get the dimension, and refine the mesh (refinement is optional). \texttt{Mesh}
    \item Define the finite element spaces required. \texttt{FiniteElementSpace}
    \item Define the coefficients, functions, and boundary conditions of the problem. \texttt{\textit{X}Coefficient}
    \item Define the LinearForm for the rhs and assemble it. \texttt{LinearForm, \textit{X}Integrator}
    \item Define the BilinearForm for the lhs and assemble it. \texttt{BilinearForm, \textit{X}Integrator}
    \item Solve the linear system. \texttt{\textit{X}Solver, \textit{X}Vector}
    \item Recover solution. \texttt{GridFunction}
    \item Show solution with a finite element visualization tool like \textbf{GLVis} \cite{Glvis} (optional).
\end{enumerate}
And, for the general code structure of the navier mini-app, we have the following steps:
\begin{enumerate}
    \item Receive an input file (.msh) with the mesh, create a parallel mesh object and refine the mesh (refinement is optional). \texttt{ParMesh}
    \item Create the flow solver by stating the order of the finite element spaces and the parameter for kinematic viscosity $\nu$. \texttt{NavierSolver}
    \item Establish the initial condition, the boundary conditions and the time step $dt$.
    \item Iterate through steps in time with the NavierSolver object and save the solution for each iteration in a parallel GridFunction. \texttt{ParGridFunction}
    \item Show the solution with a finite element visualization tool like \textbf{ParaView} \cite{paraview} (optional).
\end{enumerate}
Notice that the Mesh and GridFunction classes used in the mini-app are for the parallel version of MFEM. The reason for this, is that the navier mini-app is available for the parallel version of MFEM only. Also, the mini-app is coded in such way that the code is simple (see Appendix \ref{sec:appC}).
\newpage

\section{Lagrange vs. Raviart-Thomas finite elements}\label{sec:Vs}
In this section, we take examples 1 and 5 from \cite{MFEM}, define their problem parameters in such way that they're equivalent, create a code that implements both of them at the same time and compares both solutions ($L_2$ norm), run the code with different orders, and analyse the results.\\
\newline
Some considerations to have into account for a fair comparison are that, the order for the Mixed method should be 1 less than the order for Lagrange method, because, with this, both shape functions would have the same degree. Also, we will compare pressures and velocities with respect to the order of the shape functions and the size of the mesh ($h$ parameter). Furthermore, for the problem, the exact solution is known, so, we will use it for comparison. And, the maximum order and refinement level to be tested is determined by our computational capacity (as long as solvers converge fast). 

\subsection{Problem}\label{sec:EquivProblem}
As mentioned before, we have to find the parameters for example 1 and 5 from \cite{MFEM}, in such way that both problems are equivalent. This step is important because example \# 1 is solved using Lagrange finite elements, while example \# 5 is solved using mixed finite elements. Therefore, in order to make the comparison, the problem must be the same for both methods.\enter
Example\#1 \cite{MFEM}: Compute $p$ such that
\begin{equation} \label{Example1}
\begin{split}
        -\Delta &p=1\text{ in } \Omega\\
        &p=0\text{ in }\Gamma.
\end{split}
\end{equation}
Example\#5 \cite{MFEM}: Compute $p$ and $\mathbf{u}$ such that 
\begin{equation}  \label{Example5}
\begin{split}
        &k\mathbf{u}+\nabla p=f\text{ in } \Omega\\
        &-\mbox{div}(\mathbf{u})=g\text{ in } \Omega\\
        &-p=p_0\text{ in }\Gamma.
\end{split}
\end{equation}
From the first equation of \eqref{Example5},
\begin{equation}\label{Deduction1}
    \mathbf{u}=\frac{f-\nabla p}{k}.
\end{equation}
Then, replacing \eqref{Deduction1} on the second equation of \eqref{Example5},
\begin{equation}\label{Deduction2}
    -\mbox{div}\left(\frac{f-\nabla p}{k}\right)=g.
\end{equation}
If we set $k=1;\ f=0\ and\ g=-1$ in \eqref{Deduction2}, we get
\begin{equation}\label{Deduction3}
    -\Delta p=1,
\end{equation}
which is the first equation of \eqref{Example1}. \\
\newline
So, setting \textbf{($*$)} $p_0=0,\ k=1;\ f=0\ and\ g=-1$ in \eqref{Example5}, we get,
\begin{equation}
\begin{split}
        &\mathbf{u}+\nabla p=0\text{ in } \Omega\\
        &-\mbox{div}(\mathbf{u})=-1\text{ in } \Omega\\
        &-p=0\text{ in }\Gamma.
\end{split}
\end{equation}
Notice that from the first equation we get that $\mathbf{u}=-\nabla p$. This is important because in problem \eqref{Example1} we don't get the solution for $\mathbf{u}$ from the method, so, we will have to find it from $p$'s derivatives.\\
\newline
In the code, we will set the value of the parameters in the way shown here, so that both problems are the same. As seen in \eqref{Deduction1}-\eqref{Deduction3}, problem \eqref{Example5} is equivalent to problem \eqref{Example1} with the values assigned for coefficients and functions at \textbf{($*$)}.
\subsection{Code}\label{VsCode}
The first part of the code follows the structure mentioned in Section \ref{sec:CodeStruct}, but implemented for two methods at the same time (and with some extra lines for comparison purposes). Also, when defining boundary conditions, the \textit{essential} one is established different from the \textit{natural} one. And, after getting all the solutions, there's a second part of the code where solutions are compared between them and with the exact one. \\
\newline
\textit{Note:} \\The complete code with explanations can be found on the \texttt{Appendix A}.\\
\newline
However, before taking a look into it, the reader may have the following into account. The following table shows the convention used for important variable names along the code:
\begin{table}[h!]
\begin{tabular}{|c|c|}
\hline
\textbf{Variable Name} & \textbf{Object}                               \\ \hline
X\_space               & Finite element space X                        \\ \hline
X\_mixed               & Variable assigned to a mixed method related object    \\ \hline
u                      & Velocity solution                                     \\ \hline
p                      & Pressure solution                                    \\ \hline
X\_ex                  & Variable assigned to an exact solution object \\ \hline
\end{tabular}
\end{table}

\subsection{Tests}\label{sec:VsTests}
The tests of the two methods presented previously, were run on MFEM library on the domain shown on Figure \ref{fig:Domain}.
\begin{figure}[h!]
    \centering
    \includegraphics[scale=0.5]{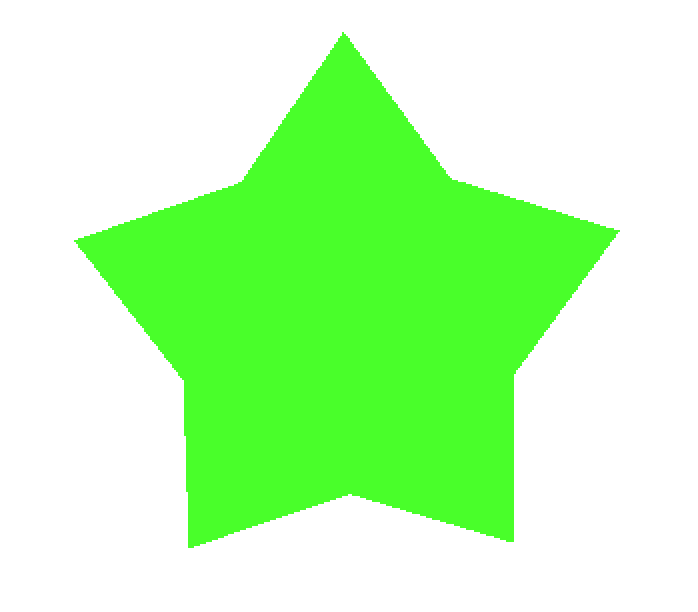}
    \caption{Illustration of the star domain used for the numerical tests.\\ Visualization: \cite{Glvis}.}
    \label{fig:Domain}
\end{figure}\\
Each run test is determined by the \textit{order} of the Lagrange shape functions and the \textit{h} parameter of the mesh. Remember that mixed shape functions have order equal to $\textit{order}-1$. The parameter \textit{order} is changed directly from the command line, while the parameter \textit{h} is changed via the number of times that the mesh is refined ($h=h(\#refinements)$). As we refine the mesh more times, finite elements of the partition decrease their size, and so, the parameter $h$ decreases.\\
\newline
Tests were run with: $order=1,\dots,N$ and $refinements=0,\dots,M$, where $N,M$ depend on the computation capacity. The star domain was partitioned using quads (instead of triangles), and such partition is shown on Figure \ref{fig:Partition}.
\begin{figure}[h!]
    \centering
    \includegraphics[scale=0.5]{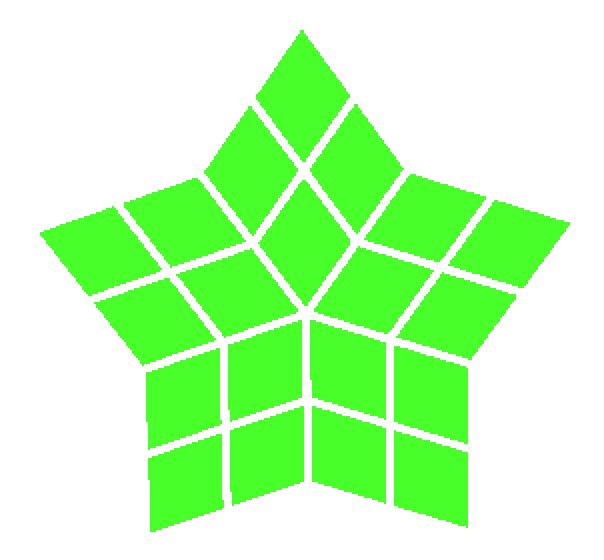}
    \caption{Initial mesh used for numerical tests (no refinements). \\Visualization: \cite{Glvis}.}
    \label{fig:Partition}
\end{figure}
\newpage
Results on Section \ref{sec:Results} are presented in graphs. However, all the exact values that were computed can be found in the \texttt{Appendix B}.
\subsection{Results}\label{sec:Results}
In Figure \ref{fig:Solution} we show the computed solution
 when running the code with $\textit{order}=2$ and $\#Refinements=3$. 
We use the visualization tool \cite{Glvis}. We mention that, 
at the scale of the plot, Lagrange and Mixed solutions look the same. 
\begin{figure}[h!]
    \centering
    \includegraphics[scale=0.25]{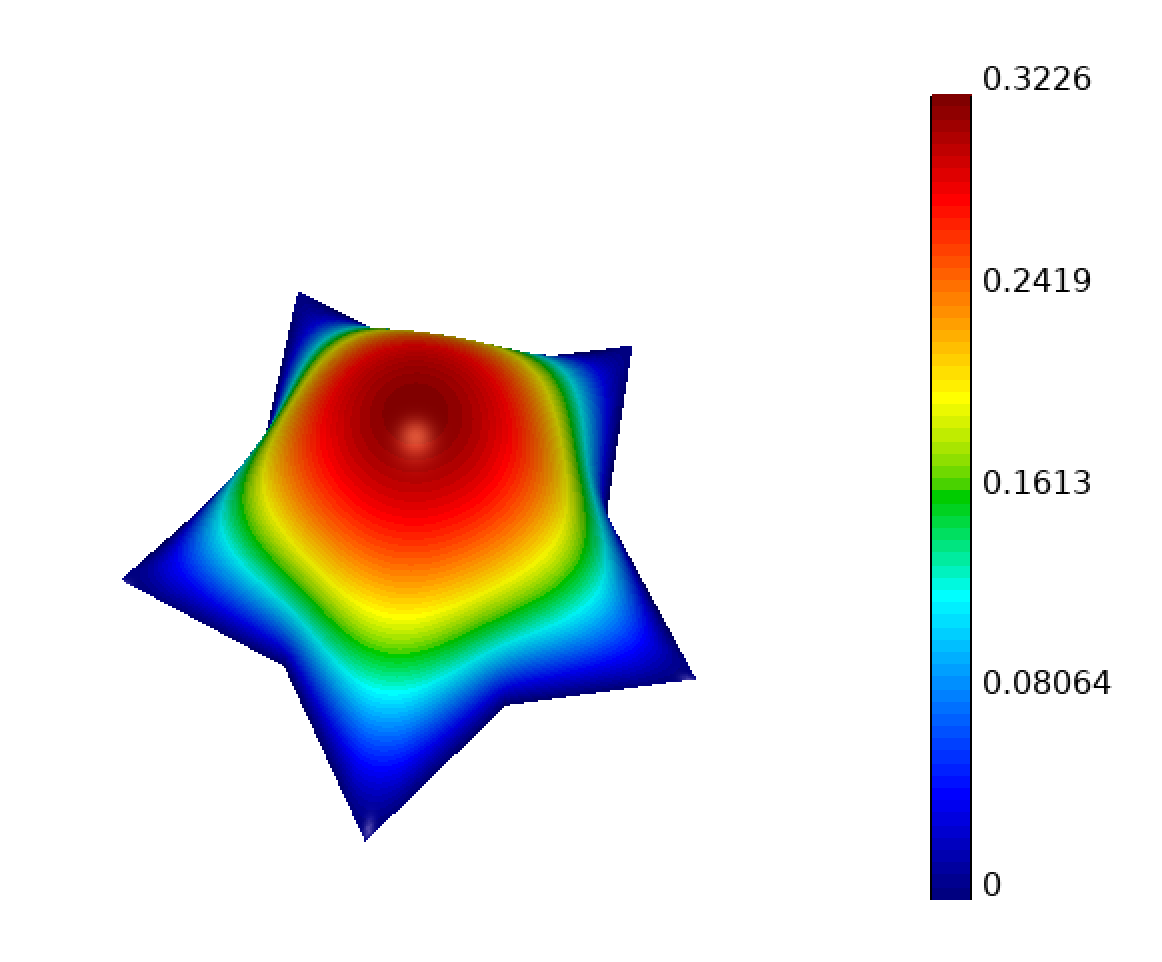}
    \includegraphics[scale=0.25]{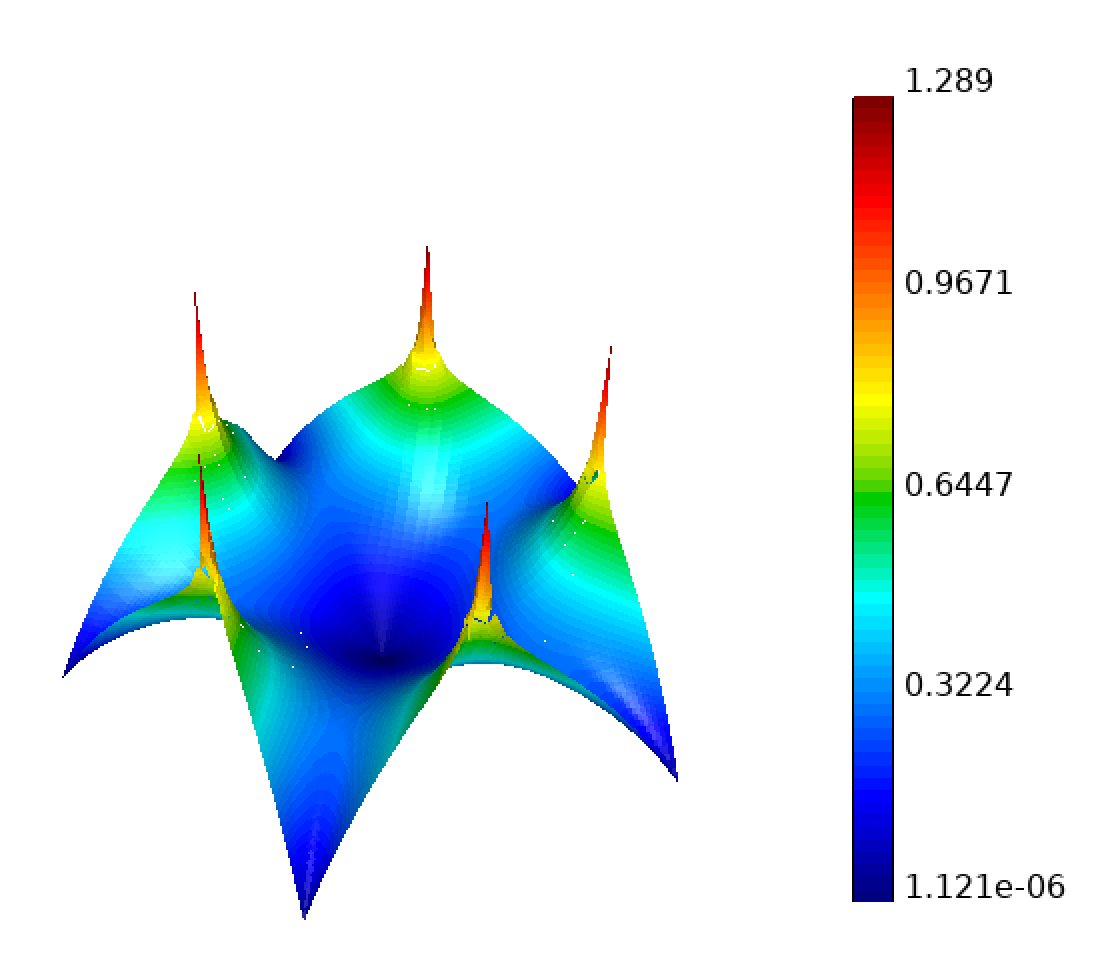}
    \caption{Illustration of computed pressure and velocities. GLVis (\cite{Glvis}) is used for this visualization. Pressure (left) and $L^2$ norm of the vector velocity (right). Visualization: \cite{Glvis}.}
    \label{fig:Solution}
\end{figure}\\
In the following results, if $u=(u_x,u_y)$ is the solution obtained by the mixed or Lagrange finite element method and $u_{ex}=(u_{x_{ex}},u_{y_{ex}})$ is the exact solution for the problem, then,
\begin{equation*}
    U_{error} = \frac{\sqrt{\left(||u_x-u_{x_{ex}}||_{L^2}\right)^2+\left(||u_y-u_{y_{ex}}||_{L^2}\right)^2}}{||u_{ex}||_{L^2}}.
\end{equation*}
\begin{figure}[h!]
    \centering
    \includegraphics[scale = 0.45]{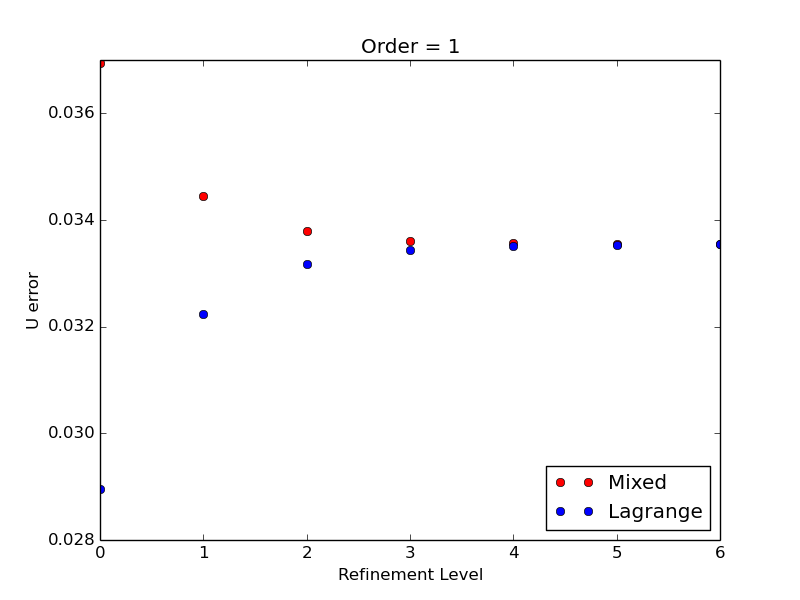}
    \caption{Variation of error with respect to the refinement level for the approximation of the solution of 
    problem \eqref{Example1} with Lagrangian finite elements of order 1 and problem \eqref{Example5} with mixed Raviart-Thomas finite elements of order 0.}
\end{figure}
\begin{figure}[h!]
    \centering
    \includegraphics[scale = 0.45]{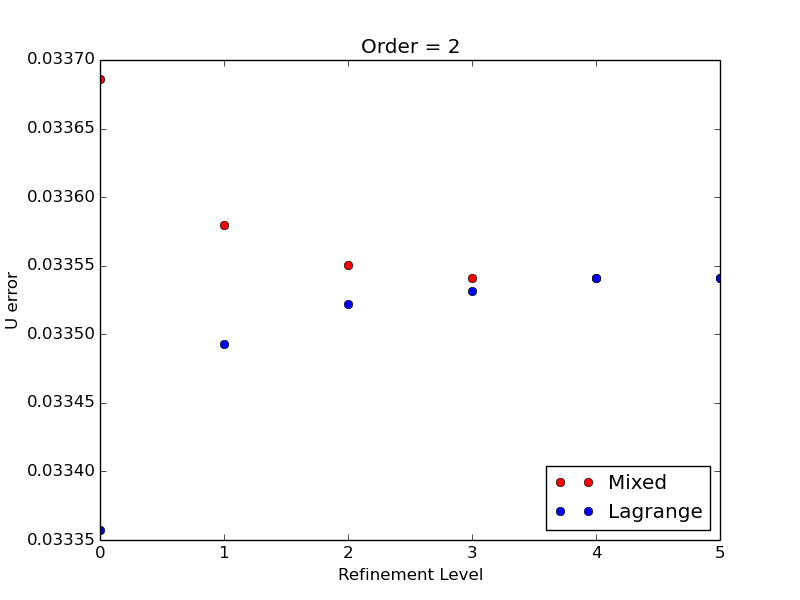}
    \caption{Variation of error with respect to the refinement level for the approximation of the solution of 
    problem \eqref{Example1} with Lagrangian finite elements of order 2 and problem \eqref{Example5} with mixed Raviart-Thomas finite elements of order 1.}
\end{figure}
\begin{figure}[h!]
    \centering
    \includegraphics[scale = 0.45]{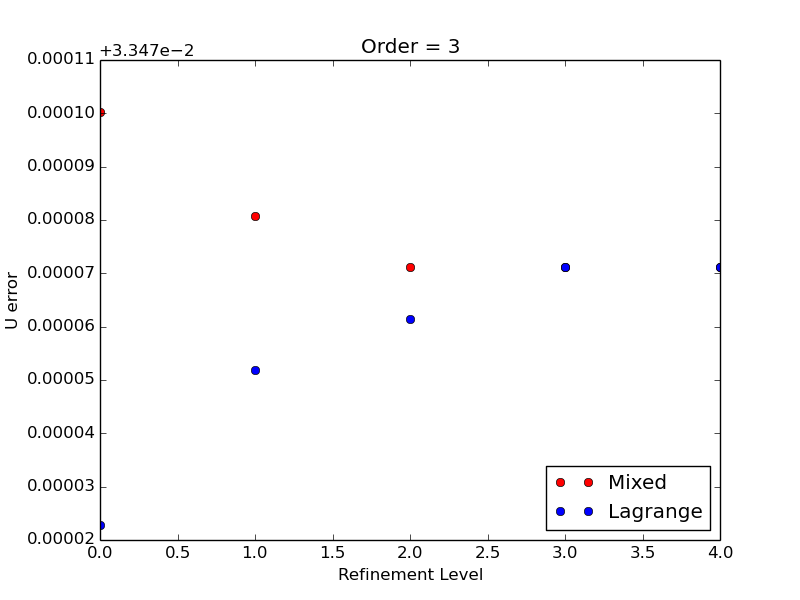}
    \caption{Variation of error with respect to the refinement level for the approximation of the solution of 
    problem \eqref{Example1} with Lagrangian finite elements of order 3 and problem \eqref{Example5} with mixed Raviart-Thomas finite elements of order 2.}
\end{figure}
\begin{figure}[h!]
    \centering
    \includegraphics[scale = 0.45]{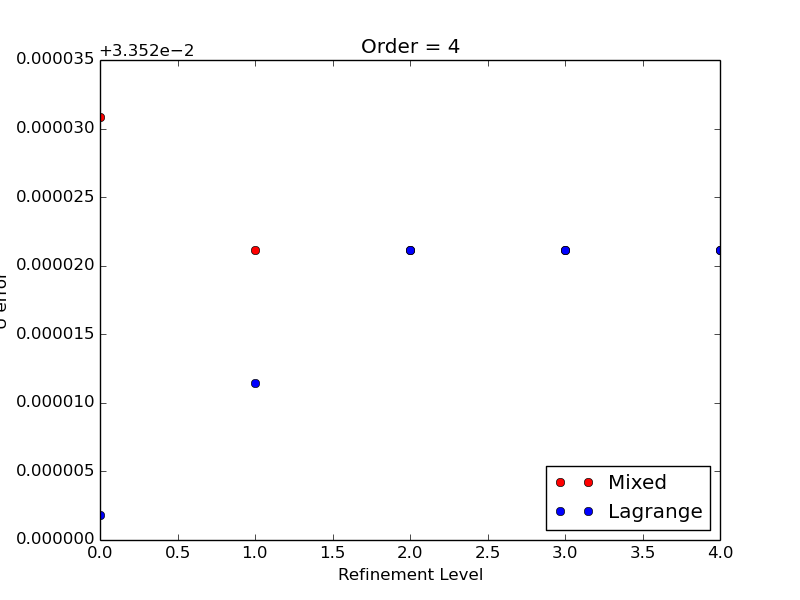}
    \caption{Variation of error with respect to the refinement level for the approximation of the solution of 
    problem \eqref{Example1} with Lagrangian finite elements of order 4 and problem \eqref{Example5} with mixed Raviart-Thomas finite elements of order 3.}
\end{figure}
\newpage
\subsection{Analysis}\label{sec:Analysis}
In this section we comment and analyze the results in tables presented on the Appendix B.\\
\newline
To understand the information presented, take into account that the exact solution would have value $0$ in \textit{X err}. Also, if the two solutions obtained (Lagrange and Mixed) are exactly the same, the value in \textit{P comp} and \textit{U comp} would be $0$. And, lower values of $h$ mean more mesh refinements, ie, smaller partition elements.\\
\newline
As expected, computational time increases as order and refinements increase. Here are the most relevant observations that can be obtained after analysing the data corresponding to \textbf{absolute errors}.\\
\newline
For fixed order, absolute errors have little variation when reducing $h$ (max variation is $4.722$e$-03$ in $Uerr$ order 1); $Perr$ increases as $h$ decreases, while $Pmx\ err$ decreases as $h$ decreases; and, $Uerr$ increases as $h$ decreases, while $Umx\ err$ decreases as $h$ decreases. \\
\newline
Absolute errors variation (respect to refinement) is lower when order is higher. For example; in order 2, $Perr$ is the same for each $h$ (up to three decimal places); while in order 6, $Perr$ is the same for each $h$ (up to five decimal places).\\
\newline
For fixed $h$, absolute errors remain almost constant between orders. Moreover, $Perr$ (absolute error obtained for pressure with Lagrange) is always lower than $Pmx\ err$ (absolute error obtained for pressure with mixed) and $Uerr$ (absolute error obtained for velocity with Lagrange) is always lower than $Umx\ err$ (absolute error obtained for velocity with mixed).\\
\newline
As order increases, pressure and velocity absolute errors tend to be the same. In order 10, the difference between $Perr$ and $Pmx\ err$ is $0.000001$ and the difference between $Uerr$ and $Umx\ err$ is $<0.0000009$.\\
\newline
However, notice that in all the cases, the absolute error was higher than $1$. In $L_2$ norm, this value is pretty little, and shows that we are only getting approximations of the exact solution.\enter
Now, the most relevant observations that can be obtained after analysing the data corresponding to \textbf{comparison errors}. First of all, comparison error tends to $0$; and comparison errors $Ucomp$ and $Pcomp$ decrease as $h$ decreases.\\
\newline
For a fixed order, comparison error can be similar to a higher order comparison error, as long as enough refinements are made. Moreover, when order increases, comparison errors are lower for fixed $h$.\enter
Pressure comparison error lowers faster than velocity comparison error. Maximum comparison errors were found at order 1 with no refinements, where $Pcomp\approx7.5$e$-02$ and $Ucomp\approx3.7$e$-02$, and minimum comparison errors were found at order 10 with 1 refinement (higher refinement level computed for order 10), where $Pcomp\approx5.1$e$-06$ and $Ucomp\approx9.8$e$-04$. It can be seen that $Pcomp$ improved in almost four decimal places while $Ucomp$ improved in just 2.

\subsection{Some other examples}\label{sec:MFEMexamples}
In this section we show three of the examples that MFEM library provides \cite{MFEM}. We only show the problem, a brief verification of the exact solution and the solution obtained using MFEM, without going into details of any type. The purpose is to show the wide variety of applications that MFEM library can have and let the reader familiarize with the visualization of some finite element method solutions. \enter
\texttt{Example 1}\\
This example is Example \# 3 of \cite{MFEM} and consists on solving the second order definite Maxwell equation $$\nabla\times\nabla\times E+E=f$$with Dirichlet boundary condition. In the example, the value for $f$ is given by $$f\begin{pmatrix}x\\y\\z\end{pmatrix}=\begin{pmatrix}\left(1+\pi^2\right)\sin(\pi y)\\\left(1+\pi^2\right)\sin(\pi z)\\\left(1+\pi^2\right)\sin(\pi x)\end{pmatrix}.$$
The exact solution for $E$ is $$E\begin{pmatrix}x\\y\\z\end{pmatrix}=\begin{pmatrix}\sin(\pi y)\\\sin(\pi z)\\\sin(\pi x)\end{pmatrix},$$ and can be verified by computing
\begin{equation*}\label{eq:verification1}
\begin{split}
    &\nabla\times\nabla\times E+E\\
    =&\vectortres{\frac{\partial}{\partial x}}{\frac{\partial}{\partial y}}{\frac{\partial}{\partial z}}\times\vectortres{\frac{\partial}{\partial x}}{\frac{\partial}{\partial y}}{\frac{\partial}{\partial z}}\times\vectortres{\sin(\pi x)}{\sin(\pi z)}{\sin(\pi y)}+\vectortres{\sin(\pi y)}{\sin(\pi z)}{\sin(\pi x)}\\
    =&\vectortres{\frac{\partial}{\partial x}}{\frac{\partial}{\partial y}}{\frac{\partial}{\partial z}}\times\vectortres{-\pi\cos(\pi z)}{-\pi\cos(\pi x)}{-\pi\cos(\pi y)}+\vectortres{\sin(\pi y)}{\sin(\pi z)}{\sin(\pi x)}\\
    =&\vectortres{\pi^2\sin(\pi y)}{\pi^2\sin(\pi z)}{\pi^2\sin(\pi x)}+\vectortres{\sin(\pi y)}{\sin(\pi z)}{\sin(\pi x)} = f.
\end{split}
\end{equation*}
The solution for $E$, $E_h$, computed with MFEM library is presented on Figure \ref{fig:example1}. The error for the approximation is $$\left|\left|E_h-E\right|\right|_{L^2}=0.39154.$$
\begin{figure}[h!]
    \centering
    \includegraphics[scale=0.3]{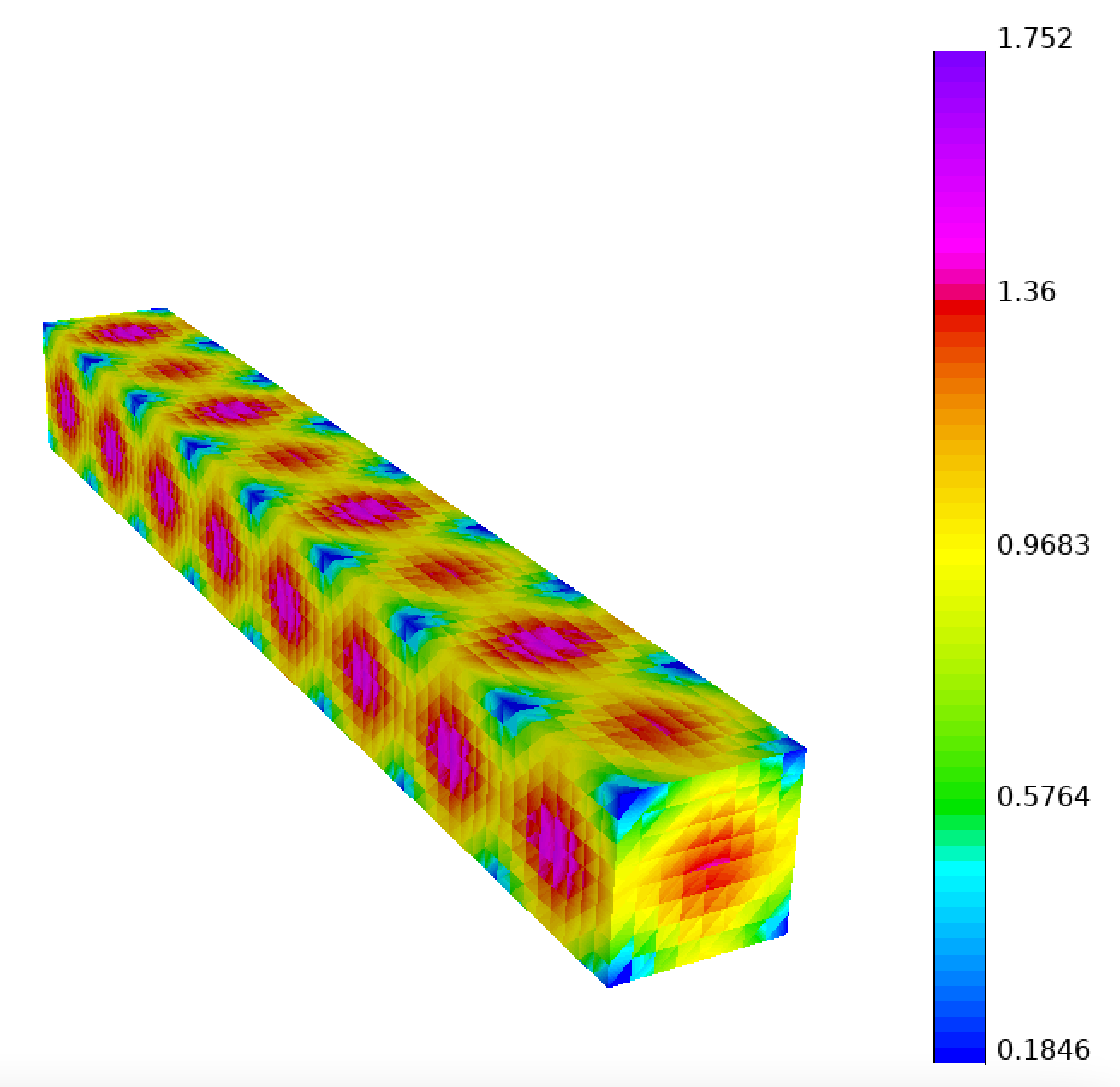}
    \caption{Visualization of the norm of the solution for the electromagnetic diffusion problem corresponding to the second order definite Maxwell equation. Solution for $E$, $E_h$, obtained using MFEM library. Visualization: \cite{Glvis}.}
    \label{fig:example1}
\end{figure}\\
\texttt{Example 2}\\
This example is Example \# 4 of \cite{MFEM} and consists on solving the diffusion problem corresponding to the second order definite equation $$-\nabla(\alpha\mbox{div}(F))+\beta F=f$$ with Dirichlet boundary condition, and, with parameters $\alpha=1$ and $\beta=3$. In the example, the value for $f$ is given by $$f\begin{pmatrix}x\\y\end{pmatrix}=\vectordos{(3+2\pi^2)\cos(\pi x)\sin(\pi y)}{(3+2\pi^2)\cos(\pi y)\sin(\pi x)}.$$
The exact solution for $F$ is $$F\begin{pmatrix}x\\y\end{pmatrix}=\vectordos{\cos(\pi x)\sin(\pi y)}{\cos(\pi y)\sin(\pi x)},$$ and can be verified by computing
\begin{equation*}\label{eq:verification2}
\begin{split}
    &-\nabla(\alpha\mbox{div}(F))+\beta F\\
    =&-\nabla\left(\mbox{div}\vectordos{\cos(\pi x)\sin(\pi y)}{\cos(\pi y)\sin(\pi x)}\right)+ 3\vectordos{\cos(\pi x)\sin(\pi y)}{\cos(\pi y)\sin(\pi x)}\\
    =&-\nabla\left(-2\pi\sin(\pi x)\sin(\pi y)\right)+ 3\vectordos{\cos(\pi x)\sin(\pi y)}{\cos(\pi y)\sin(\pi x)}\\
    =&-\vectordos{-2\pi^2\cos(\pi x)\sin(\pi y)}{-2\pi^2\sin(\pi x)\cos(\pi y)}+ 3\vectordos{\cos(\pi x)\sin(\pi y)}{\cos(\pi y)\sin(\pi x)}\\
    =&\vectordos{2\pi^2\cos(\pi x)\sin(\pi y)}{2\pi^2\cos(\pi y)\sin(\pi x)}+ \vectordos{3\cos(\pi x)\sin(\pi y)}{3\cos(\pi y)\sin(\pi x)} = f.
\end{split}
\end{equation*}
The solution for $F$, $F_h$, computed with MFEM library is presented on Figure \ref{fig:example2}. The domain is a square with a circular hole in the middle. On the visualization, the triangular elements of the mesh are shown. The error for the approximation is $$\left|\left|F_h-F\right|\right|_{L^2}=5.55372\times 10^{-6}.$$
\begin{figure}[h!]
    \centering
    \includegraphics[scale=0.3]{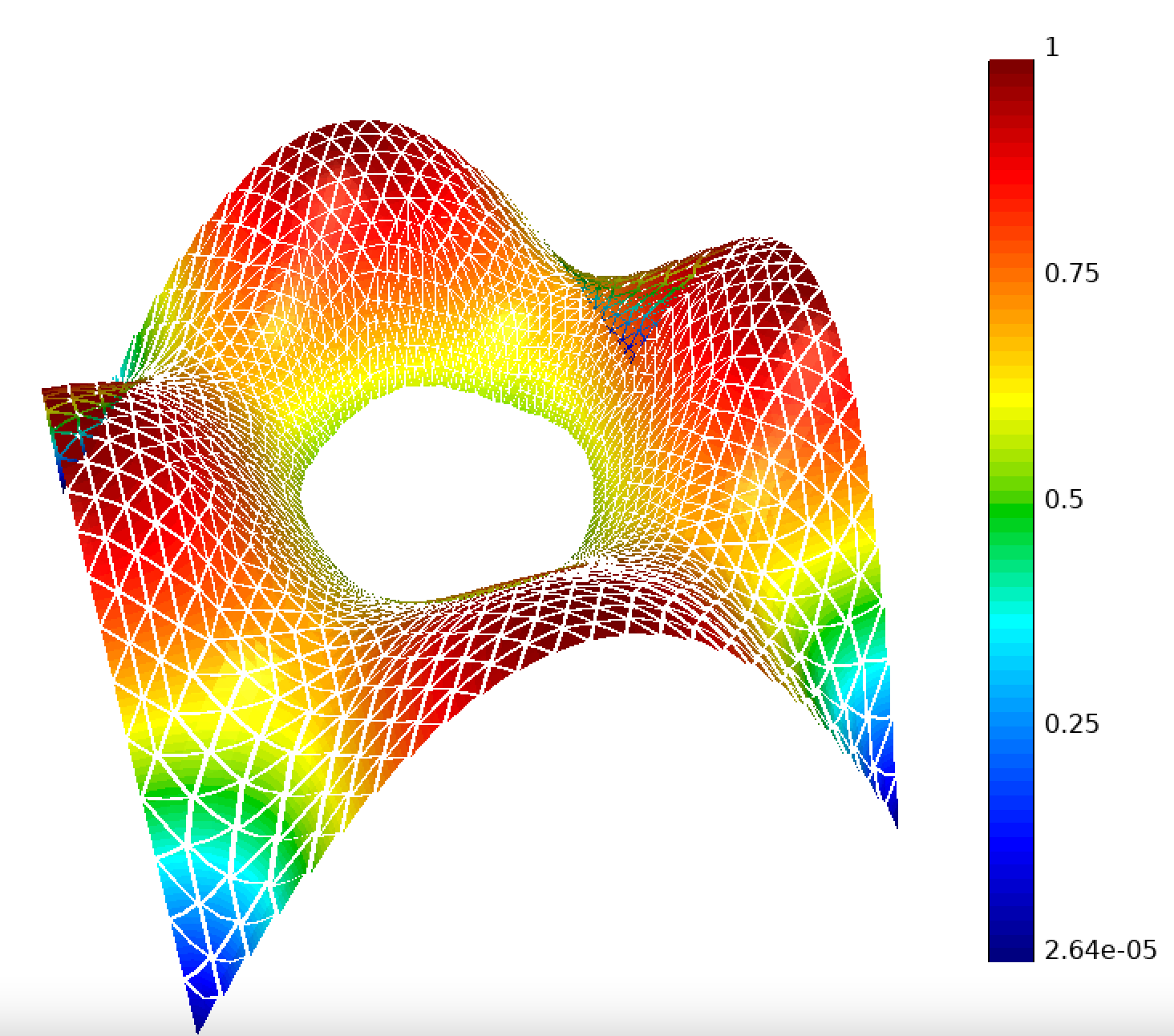}
    \caption{Visualization of the norm of the solution for the diffusion problem showing the mesh elements. Solution for $F$, $F_h$, obtained using MFEM library. Visualization: \cite{Glvis}.}
    \label{fig:example2}
\end{figure}\\
\texttt{Example 3}\\
This example is Example \# 7 of \cite{MFEM} and consists on solving the Laplace problem with mass term corresponding to the equation $$-\Delta u+u=f.$$ In the example, the value for $f$ is given by $$f\begin{pmatrix}x\\y\\z\end{pmatrix}=\frac{7xy}{x^2+y^2+z^2}.$$
The exact solution for $u$ is $$u\begin{pmatrix}x\\y\\z\end{pmatrix}=\frac{xy}{x^2+y^2+z^2},$$ and can be verified by computing
\begin{equation*}\label{eq:verification3}
\begin{split}
    &-\Delta u+u\\
    =&-\mbox{div}\left(\nabla\left(\frac{xy}{x^2+y^2+z^2}\right)\right)+\frac{xy}{x^2+y^2+z^2}\\
    =&-\mbox{div}\left(\frac{1}{(x^2+y^2+z^2)^2}\begin{pmatrix}
    y(-x^2+y^2+z^2)
    \\x(x^2-y^2+z^2)
    \\-2xyz
    \end{pmatrix}\right)+\frac{xy}{x^2+y^2+z^2}\\
    =&-\left(-\frac{6xy}{x^2+y^2+z^2}\right)+\frac{xy}{x^2+y^2+z^2}\\
    =&\frac{7xy}{x^2+y^2+z^2}=f.
\end{split}
\end{equation*}
The solution for $u$, $u_h$, computed with MFEM library is presented on Figure \ref{fig:example3}. The error for the approximation is $$\left|\left|u_h-u\right|\right|_{L^2}=0.00236119.$$
\begin{figure}[h!]
    \centering
    \includegraphics[scale=0.21]{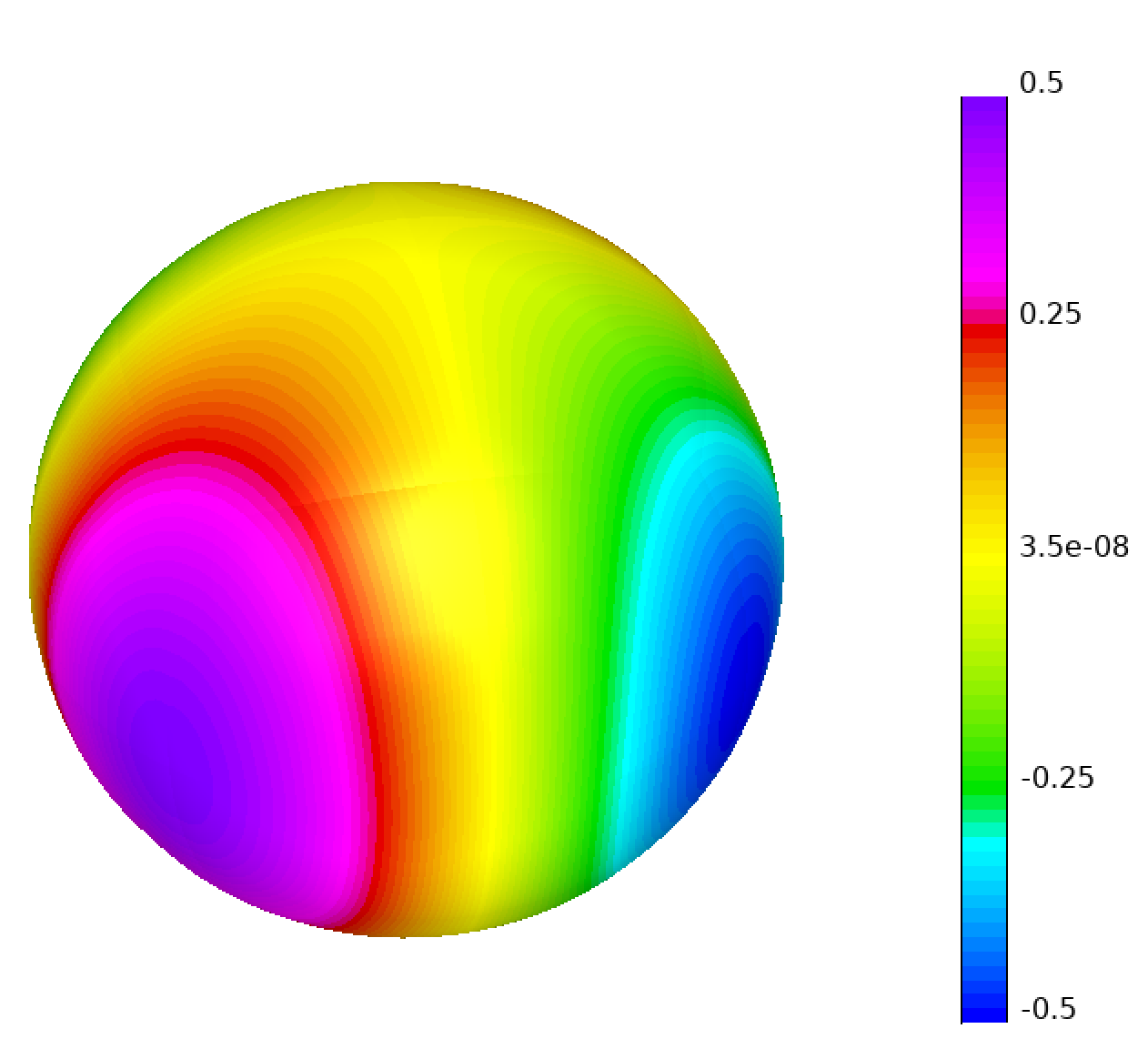}
    \caption{Visualization of the solution for the Laplace problem with mass term. Solution for $u$, $u_h$, obtained using MFEM library. Visualization: \cite{Glvis}.}
    \label{fig:example3}
\end{figure}\enter
These examples show that MFEM can solve several types of equations that can include divergence, curl, gradient and Laplacian operators. Also, the given parameter $f$ was a function of the form $\mathbb{R}^3\rightarrow\mathbb{R}^3$, $\mathbb{R}^2\rightarrow\mathbb{R}^2$ and $\mathbb{R}^3\rightarrow\mathbb{R}$, which shows that MFEM can work with scalar and vectorial functions in a 2D or 3D domain. Although the shown visualizations were norms of the solution, using another visualization tool, such as \cite{paraview}, the vectors of the solution can be seen.

\section{Numerical Experiments with NS}\label{sec:NSExp}
In this section we run some computational experiments solving Navier-Stokes equations using MFEM's Navier Stokes mini app. One of the experiments is in a 2D domain (Section \ref{sec:NS2D}) and the other one is in a 3D domain (Section \ref{sec:NS3D}). The two dimensional experiment converges to a steady state when the initial condition is the steady state with a small perturbation. In such experiment, we compare graphically the pressure solution obtained when changing the order and the refinement level. On the other hand, in the three dimensional experiment, we revise a graphical solution for a dynamical system, where turbulence is present.
\subsection{2D Experiment: Steady State}\label{sec:NS2D}
The two dimensional domain is a $2\times4$ rectangle whose vertex coordinates are $(-0.5,1.5)$, $(-0.5,-0.5)$, $(1,1.5)$ and $(1,-0.5)$, as shown on Figure \ref{fig:2Domain}.
\begin{figure}[h!]
    \centering
    \includegraphics[scale=0.5]{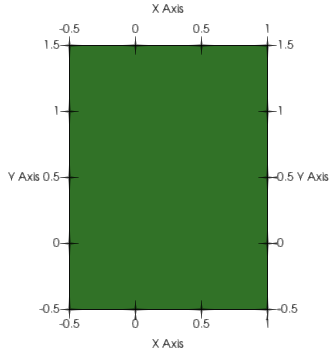}
    \caption{Two dimensional domain used in the 2D Navier-Stokes experiment. It is a $2\times4$ rectangle. Visualization: \cite{paraview}.}
    \label{fig:2Domain}
\end{figure}
\newpage
Also, the default mesh (with no refinements) is presented on figure \ref{fig:2Dmesh}.
\begin{figure}[h!]
    \centering
    \includegraphics[scale=0.4]{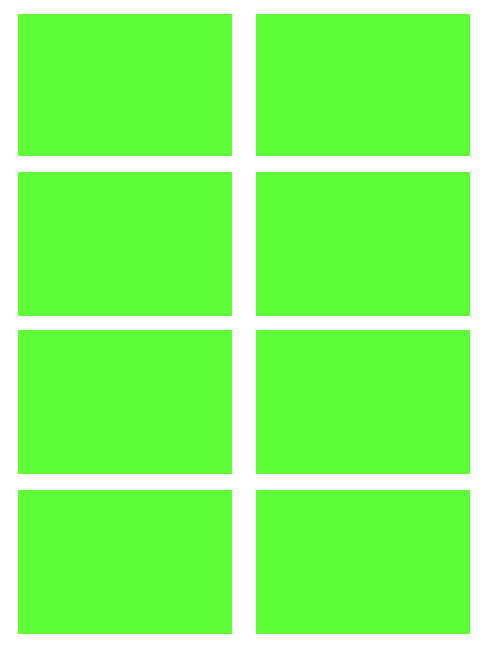}
    \caption{Two dimensional domain mesh used in the 2D Navier-Stokes experiment. Visualization: \cite{paraview}.}
    \label{fig:2Dmesh}
\end{figure}\\
\newcommand{\lamsteady}[0]{
    2\left(10-\sqrt{100+\pi^2}\right)
}
For this experiment, $Re=40$ and the velocity boundary condition was settled to be 
\begin{equation}\label{eq:boundaryvelsteady}
    u_0(x,y)=\begin{pmatrix}
1-e^{\lamsteady x} \cdot\cos\left(2\pi y\right)\\
\frac{\lamsteady}{2\pi}\cdot e^{\lamsteady x}\cdot\sin\left(2\pi y\right)
\end{pmatrix}.
\end{equation}
If the initial condition is picked to be $u_0$, then the system is already on a steady state. As we wanted the experiment to \textit{reach} the steady state, then initial condition was ensured to be
\begin{equation}\label{eq:initialvelsteady}
    u_i(x,y)=u_0(x,y)+\delta\begin{pmatrix}
(x+0.5)(x-1)(y+0.5)(y-1.5)\\
(x+0.5)(x-1)(y+0.5)(y-1.5)
\end{pmatrix},
\end{equation}
where the $\delta$ parameter in the initial condition gives the magnitude of the perturbation from the steady state. The parameter was picked to be $\delta=0.001$. Notice that the term after $\delta$ vanishes in the boundary of the domain.\enter
The expected result in pressure (computed with order 6 and 5 refinements) is presented on figure \ref{fig:2dresult}, which corresponds to the steady state reached by the system.
\begin{figure}[h!]
    \centering
    \includegraphics[scale=0.35]{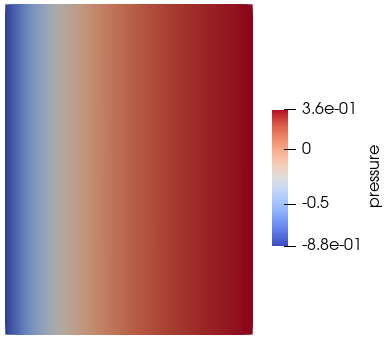}
    \caption{Expected solution for the pressure in the 2D experiment, computed with order 6 and 5 refinements. It is the steady state of the system. Visualization: \cite{paraview}.}
    \label{fig:2dresult}
\end{figure}
\newpage
For illustration of how the system changed, the initial condition for the pressure is shown in figure \ref{fig:2dinitial}.
\begin{figure}[h!]
    \centering
    \includegraphics[scale=0.35]{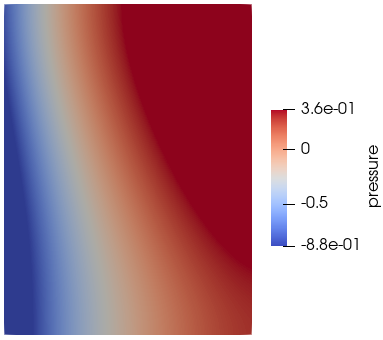}
    \caption{Initial condition for pressure in the 2D experiment, computed with order 6 and 5 refinements. Visualization: \cite{paraview}.}
    \label{fig:2dinitial}
\end{figure}\\
Notice that the initial condition has a higher pressure on the upper-right part of the domain and it's not uniform along any of the axes. However, after the system reaches the steady state pressure is constant for a fixed $x$ value.\enter
Now, the experiment consists on iterating through different orders and changing the refinement level for each of the orders, in order to check differences in the graphical solution. The experiment was done using a time step of $dt=0.001$ with a total time of $T=0.05$. It was computed with the parallel version of MFEM library (with 4 cores), using the Navier Miniapp \cite{MFEM}. The system was solved 36 times, corresponding to $order=1,2,3,4,5,6$, and for each of them, $\#refinements=0,1,2,3,4,5$. After checking all of the results in ParaView \cite{paraview}, we summarized the general behaviour of the steady state solutions, as shown on figure \ref{fig:2dexperiment}. As notation, each of the results has a corresponding $(k,r)$ value, where $k$ denotes the order and $r$ the amount of refinements made.
\begin{figure}[h!]
    \centering
    \includegraphics[scale=0.5]{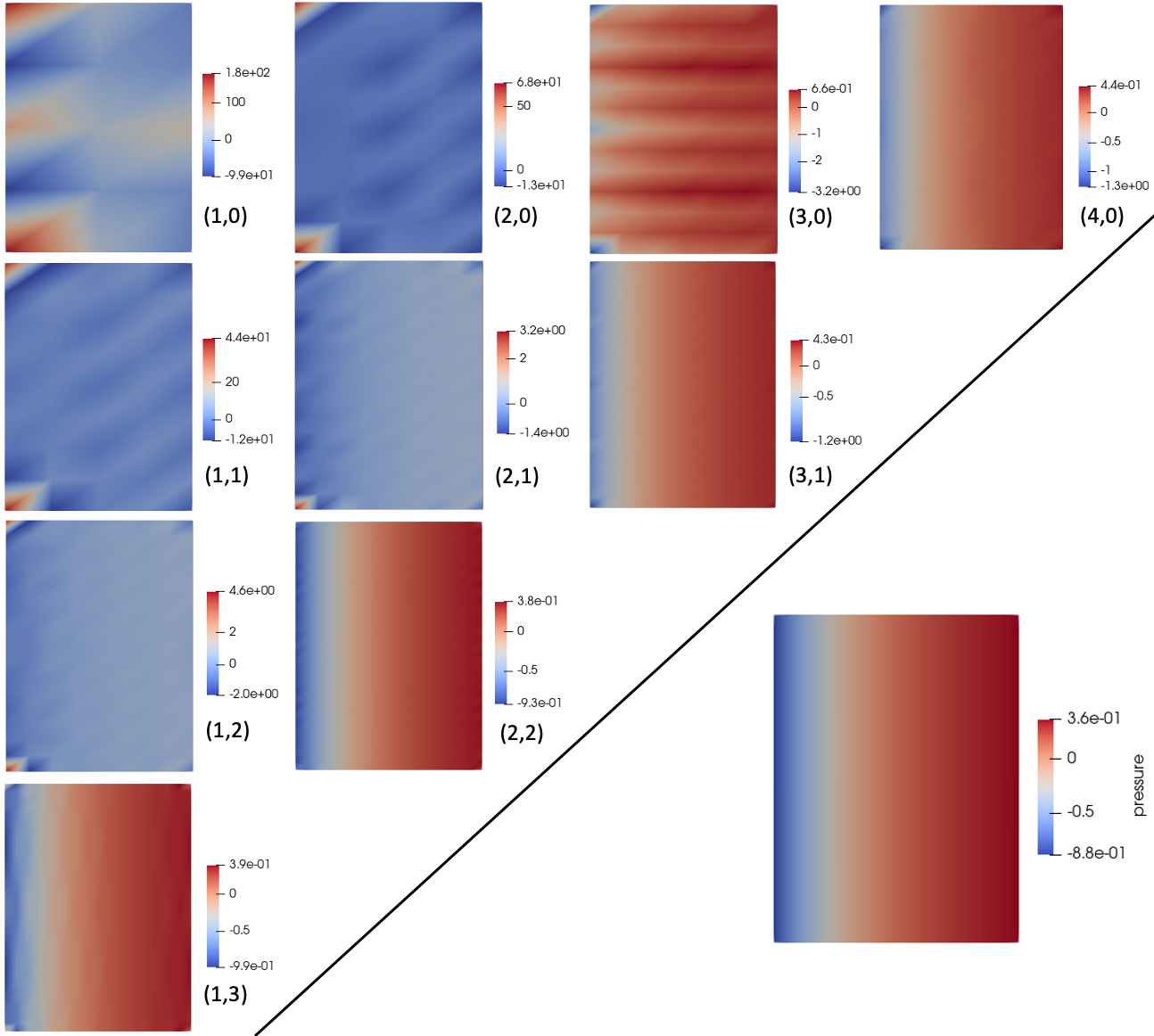}
    \caption{Summary of the steady state solutions for the 2D experiment as order and refinement level changes. The orders and refinement levels that don't appear are almost identical to the expected steady state. Visualization: \cite{paraview}.}
    \label{fig:2dexperiment}
\end{figure}\\
From the summary it is clear that with low order and low refinement levels, the approximation for the solution is not a good one. It's interesting to notice that increasing $1$ in the order can be similar to increasing $1$ in the number of refinements, as seen on the steady state of $(1,1)\sim(2,0)$, $(1,2)\sim(2,1)$ and $(3,1)\sim(4,0)$.\enter
Furthermore, with a high order, the solution converges as expected even if the mesh is not refined. Finally, note that for the lowest order or for the lowest refinement level, an increase of $5$ in the counterpart achieves a good approximation (the steady states that are not shown are the ones that already achieve a good approximation).

\subsection{3D Experiment: Turbulence }\label{sec:NS3D}
The three dimensional domain is a $0.4\times0.4\times2.5$ parallelepiped with a vertex on the origin $(0,0,0)$ and the other ones on the positive part of the coordinate system. It has a cylindrical hole, parallel to the $yx$-plane, whose cross section center is located at $(0.5,0.2)$ and has a radius of $0.05$. The domain is shown on Figure \ref{fig:3Domain}.
\begin{figure}[h!]
    \centering
    \includegraphics[scale=0.5]{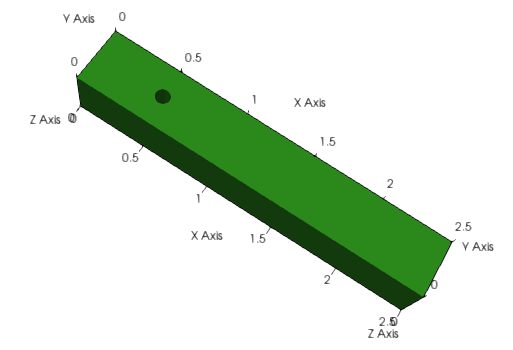}
    \caption{Three dimensional domain used in the 3D Navier-Stokes experiments. It is a $0.4\times0.4\times2.5$ parallelepiped with a cylindrical hole. Visualization: \cite{paraview}.}
    \label{fig:3Domain}
\end{figure}\\
For the experiment, both the initial condition and the boundary condition for velocity were settled to be the same:
\begin{equation}\label{eq:vel3D}
    u_0\begin{pmatrix}
    x\\y\\z\\t
    \end{pmatrix}=u_i\begin{pmatrix}
    x\\y\\z\\t
    \end{pmatrix}=\begin{pmatrix}
    u_x(x,y,z,t)\\0\\0,
    \end{pmatrix}
\end{equation}
where 
\begin{equation}\label{eq:velx3D}
    u_x(x,y,z,t)=\left\{\begin{split}
    \frac{36yz}{0.41^4}\sin\left(\frac{\pi t}{8}\right)(0.41-y)(0.41-z),\text{ if }x\leq 10^{-8},\\
    0,\text{ if }x>10^{-8}.
\end{split}\right.
\end{equation}
Notice that the velocity condition simulates a system where the fluid is entering through the squared face of the domain near the hole. Also, the boundary condition was forced only on the inlet and on the walls of the domain.
\newpage 
Furthermore, the experiment was computed with $4$-th order elements, using a time step of $dt=0.001$, a total time of $T=8$ and the parameter of kinematic viscosity for the fluid being $\nu=0.001$.\enter
For the visualization of the solutions, we used ParaView's \cite{paraview} stream tracer functionality, which shows the stream lines of the system at a given moment. Recall that the stream lines are tangent to the vector field of the velocity, and they show the trajectory of particles through the field (at a given instant of time). Also, there are 5 visualizations, corresponding to times $t=0,2,4,6,8$, and have the value of pressure codified in a color scale.\enter
At $t=0$, the stream lines show a laminar flow (all lines are almost parallel and follow the same direction) that avoids the obstacle, and the pressure is high on the left because the fluid is entering though that part of the domain, as shown on figure \ref{fig:3D1}.
\begin{figure}[h!]
    \centering
    \includegraphics[scale=0.4]{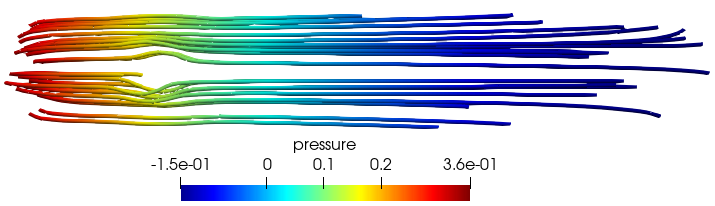}
    \caption{Stream lines for the velocity of the 3D experiment at $t=0$. Visualization: \cite{paraview}.}
    \label{fig:3D1}
\end{figure}\\
Then, at $t=2$, more fluid is coming in, therefore, the pressure on the left increases. However, the value of pressure after the obstacle starts to have some variations, which will cause the turbulence later.
\begin{figure}[h!]
    \centering
    \includegraphics[scale=0.4]{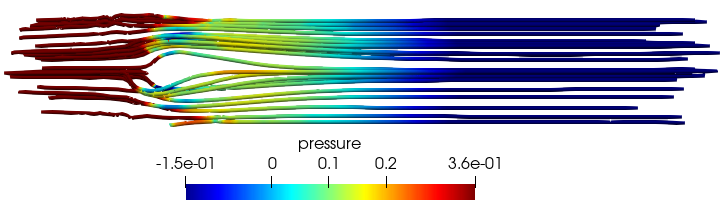}
    \caption{Stream lines for the velocity of the 3D experiment at $t=2$. Visualization: \cite{paraview}.}
    \label{fig:3D2}
\end{figure}\\
At $t=4$, the turbulence starts to show up. As seen on figure \ref{fig:3D3}, after the obstacle, some of the stream lines have spiral forms.
\newpage
\begin{figure}[h!]
    \centering
    \includegraphics[scale=0.4]{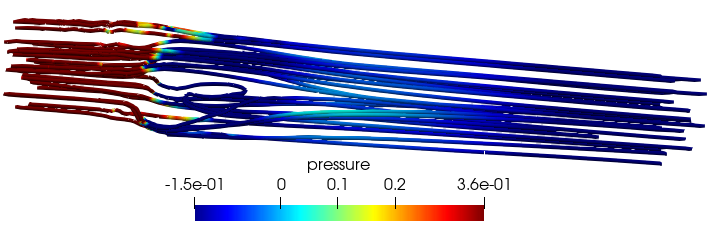}
    \caption{Stream lines for the velocity of the 3D experiment at $t=4$. Visualization: \cite{paraview}.}
    \label{fig:3D3}
\end{figure}
Then, at $t=6$, the pressure on the left finally starts to lower (with a lot of variation) and the pressure on the right starts to increase, because the fluid is already passing through the obstacle and no more fluid is coming in.. More spiral-shaped stream lines show up after the obstacle.
\begin{figure}[h!]
    \centering
    \includegraphics[scale=0.4]{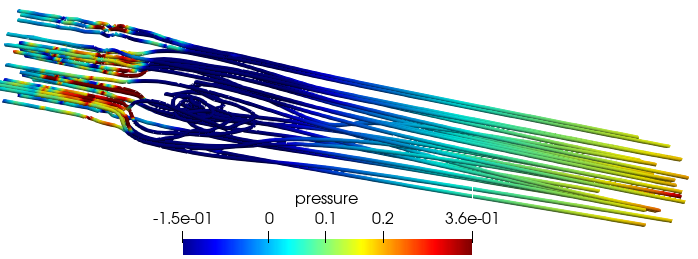}
    \caption{Stream lines for the velocity of the 3D experiment at $t=6$. Visualization: \cite{paraview}.}
    \label{fig:3D4}
\end{figure}\\
Finally, at $t=T=8$, the pressure on the left is low and on the right is high. However, the high variation of pressure in previous time steps generated a lot of turbulence near the obstacle. The fluid presents two states, a laminar one, and a turbulent one. The turbulent state, characterized by spiral movement and swirls, is presented near the obstacle; while the laminar state, characterized by straight lines, is presented away from the obstacle. As seen on figure \ref{fig:3D5}.
\begin{figure}[h!]
    \centering
    \includegraphics[scale=0.4]{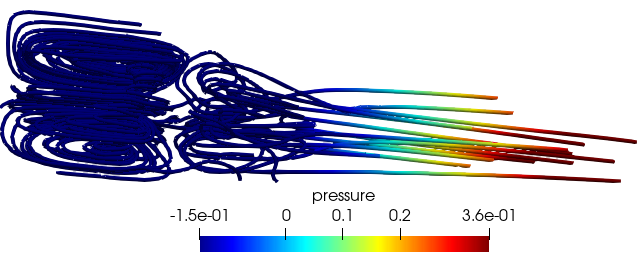}
    \caption{Stream lines for the velocity of the 3D experiment at $t=T=8$. Visualization: \cite{paraview}.}
    \label{fig:3D5}
\end{figure}\\
\newpage

\section{Conclusion and Perspectives}\label{sec:conclusion}
The MFEM library allows us to approximate the solution of partial differential equations in a versatile way. Moreover, the library has a lot of potential because it can compute with high order elements without requiring a very powerful computer, for example, we could run experiments with elements of order 10 (which is a relatively high order). Also, the navier mini app of the library provides a simple, precise and efficient way for simulating dynamical systems that involve incompressible fluids. Furthermore, it is important to use a good visualization tool, preferably one that allows the visualization of vector fields and stream lines when working with fluid equations. Finally, recall that finite element methods enable the study of systems that depend on difficult partial differential equations.\enter
Following this work, some study can be made on some of the following topics:
\begin{itemize}
    \item Fluid modeling via partial differential equations.
    \item Error theorems for finite element methods.
    \item Effects of the mesh in the solution.
    \item Solutions for the equations when the parameters are not constants, but functions.
    \item Discretization of time.
    \item Picking of the time step $dt$, in order to achieve an appropriate solution.
    \item Turbulence and laminar flow.
\end{itemize}
\newpage

\newpage

\section{Appendices}\label{sec:appendix}
\subsection{Appendix A : Code for comparison}\label{sec:appA}
Here, the code used for Section \ref{sec:Vs} (written in C++) is shown, with a brief explanations of it's functionality.\\
\definecolor{fondo}{rgb}{0.95,0.95,0.95}
\definecolor{comentarios}{rgb}{0,0.35,0.05}
\definecolor{claves}{rgb}{0,0,0.4}
\lstset{language=C++,
backgroundcolor=\color{fondo},
commentstyle=\color{comentarios},
stringstyle=\color{comentarios},
keywordstyle=\bfseries\color{claves},
breaklines=true
}
\xitem Include the required libraries (including MFEM) and begin main function.
\begin{lstlisting}[frame=single]
#include "mfem.hpp"
#include <fstream>
#include <iostream>
using namespace std;
using namespace mfem;
int main(int argc, char *argv[]){
\end{lstlisting}
\xitem Parse command-line options (in this project we only change "order" option) and print them.
\begin{lstlisting}[frame=single]
const char *mesh_file = "../data/star.mesh";
int order = 1;
bool visualization = true;
OptionsParser args(argc, argv);
args.AddOption(&mesh_file, "-m", "--mesh",
               "Mesh file to use.");
args.AddOption(&order, "-o", "--order",
            "Finite element order (polynomial degree).");
args.AddOption(&visualization, "-vis", "--visualization", "-no-vis", "--no-visualization",
               "Enable or disable GLVis visualization.");
args.Parse();
if (!args.Good()){
   args.PrintUsage(cout);
   return 1;
}
args.PrintOptions(cout);
\end{lstlisting}
\xitem Create mesh object from the \texttt{star.mesh} file and get it's dimension.
\begin{lstlisting}[frame=single]
Mesh *mesh = new Mesh(mesh_file,1,1); 
int dim = mesh->Dimension();
\end{lstlisting}
\newpage
\xitem Refine the mesh a given number of times (\texttt{UniformRefinement}).
\begin{lstlisting}[frame=single]
int ref_levels;
cout << "Refinements: ";
cin >> ref_levels;
for (int l = 0; l < ref_levels; l++){
mesh->UniformRefinement();
}
\end{lstlisting}
\xitem Get size indicator for mesh size (\texttt{h\_max}) and print it.
\begin{lstlisting}[frame=single]
double mesh_size, h = 0;
for (int i=0;i<mesh->GetNE();i++){
    mesh_size = mesh->GetElementSize(i,2);
    if(mesh_size>h){
        h = mesh_size;
    }
}
cout << "h: " << h << endl;
\end{lstlisting}
\xitem Define finite element spaces. For mixed finite element method, the order will be one less than for Lagrange finite element method. The last one is a vector $L^2$ space that we will use later to get mixed velocity components.
\begin{lstlisting}[frame=single]
FiniteElementCollection *H1 = new H1_FECollection(order, dim);
FiniteElementSpace *H1_space = new FiniteElementSpace(mesh, H1);
FiniteElementCollection *hdiv_coll(new RT_FECollection(order-1, dim));
FiniteElementCollection *l2_coll(new L2_FECollection(order-1, dim));
FiniteElementSpace *R_space = new FiniteElementSpace(mesh, hdiv_coll);
FiniteElementSpace *W_space = new FiniteElementSpace(mesh, l2_coll);
FiniteElementSpace *V_space = new FiniteElementSpace(mesh, l2_coll, 2);
\end{lstlisting}
\newpage
\xitem Define the parameters of the mixed problem. \texttt{C} functions are defined at the end. Boundary condition is natural.
\begin{lstlisting}[frame=single]
ConstantCoefficient k(1.0);
void fFun(const Vector & x, Vector & f);
VectorFunctionCoefficient fcoeff(dim, fFun);
double gFun(const Vector & x);
FunctionCoefficient gcoeff(gFun);
double f_bound(const Vector & x);
FunctionCoefficient fbndcoeff(f_bound);
\end{lstlisting}
\xitem Define the parameters of the Lagrange problem. Boundary condition is essential.
\begin{lstlisting}[frame=single]
ConstantCoefficient one(1.0);
Array<int> ess_tdof_list;
if (mesh->bdr_attributes.Size()){
   Array<int> ess_bdr(mesh->bdr_attributes.Max());
   ess_bdr = 1;
   H1_space->GetEssentialTrueDofs(ess_bdr, ess_tdof_list);
}
\end{lstlisting}
\xitem Define the exact solution. \texttt{C} functions are defined at the end.
\begin{lstlisting}[frame=single]
void u_ex(const Vector & x, Vector & u);
double p_ex(const Vector & x);
double u_ex_x(const Vector & x);
double u_ex_y(const Vector & x);
\end{lstlisting}
\xitem Get space dimensions and create vectors for the right hand side.
\begin{lstlisting}[frame=single]
Array<int> block_offsets(3);
block_offsets[0] = 0;
block_offsets[1] = R_space->GetVSize();
block_offsets[2] = W_space->GetVSize();
block_offsets.PartialSum();
BlockVector rhs_mixed(block_offsets);
Vector rhs(H1_space->GetVSize());
\end{lstlisting}
\newpage
\xitem Define the right hand side. These are \texttt{LinearForm} objects associated to some finite element space and rhs vector. "f" and "g" are for the mixed method and "b" is for the Lagrange method. Also, "rhs" vectors are the variables that store the information of the right hand side.
\begin{lstlisting}[frame=single]
LinearForm *fform(new LinearForm);
fform->Update(R_space, rhs_mixed.GetBlock(0), 0);
fform->AddDomainIntegrator(new VectorFEDomainLFIntegrator(fcoeff));
fform->AddBoundaryIntegrator(new VectorFEBoundaryFluxLFIntegrator(fbndcoeff));
fform->Assemble();

LinearForm *gform(new LinearForm);
gform->Update(W_space, rhs_mixed.GetBlock(1), 0);
gform->AddDomainIntegrator(new DomainLFIntegrator(gcoeff));
gform->Assemble();

LinearForm *b(new LinearForm);
b->Update(H1_space, rhs, 0);
b->AddDomainIntegrator(new DomainLFIntegrator(one));
b->Assemble();
\end{lstlisting}
\xitem Create variables to store the solution. "x" is the \texttt{Vector} used as input in the iterative method.
\begin{lstlisting}[frame=single]
BlockVector x_mixed(block_offsets);
GridFunction u_mixed(R_space), p_mixed(W_space), ux_mixed(W_space), uy_mixed(W_space), ue(V_space);
Vector x(H1_space->GetVSize());
GridFunction ux(W_space),uy(W_space),p(H1_space);
\end{lstlisting}
\newpage
\xitem Define the left hand side for mixed method. This is the \texttt{BilinearForm} representing the Darcy matrix \eqref{LaplaceMixedEnd}. \texttt{VectorFEMMassIntegrator} is asociated to $k*u-\nabla p$ and \texttt{VectorFEDDivergenceIntegrator} is asociated to $\mbox{div}(u)$.
\begin{lstlisting}[frame=single]
BilinearForm *mVarf(new BilinearForm(R_space));
MixedBilinearForm *bVarf(new MixedBilinearForm(R_space, W_space));
mVarf->AddDomainIntegrator(new VectorFEMassIntegrator(k));
mVarf->Assemble();
mVarf->Finalize();
SparseMatrix &M(mVarf->SpMat());
bVarf->AddDomainIntegrator(new VectorFEDivergenceIntegrator);
bVarf->Assemble();
bVarf->Finalize();
SparseMatrix & B(bVarf->SpMat());
B *= -1.;
SparseMatrix *BT = Transpose(B);
BlockMatrix D(block_offsets);
D.SetBlock(0,0, &M);
D.SetBlock(0,1, BT);
D.SetBlock(1,0, &B);
\end{lstlisting}
\xitem Define the left hand side for Lagrange method. This is the \texttt{BilinearForm} asociated to the laplacian operator. \texttt{DiffusionIntegrator} is asociated to $\Delta u$. The method \texttt{FormLinearSystem} is only used to establish the essential boundary condition.
\begin{lstlisting}[frame=single]
OperatorPtr A;
Vector XX,BB;
BilinearForm *a(new BilinearForm(H1_space));
a->AddDomainIntegrator(new DiffusionIntegrator(one));
a->Assemble();
a->FormLinearSystem(ess_tdof_list, p, *b, A, XX, BB);
\end{lstlisting}
\newpage
\xitem Solve linear systems with MINRES (for mixed) and CG (for Lagrange). \texttt{SetOperator} method establishes the lhs. \texttt{Mult} method executes the iterative algorithm and receives as input: the rhs and the vector to store the solution.
\begin{lstlisting}[frame=single]    
int maxIter(10000); 
double rtol(1.e-6);
double atol(1.e-10);
    
MINRESSolver Msolver;
Msolver.SetAbsTol(atol);
Msolver.SetRelTol(rtol);
Msolver.SetMaxIter(maxIter);
Msolver.SetPrintLevel(0); 
Msolver.SetOperator(D); 
x_mixed = 0.0;
Msolver.Mult(rhs_mixed, x_mixed);
if (Msolver.GetConverged())
   std::cout << "MINRES converged in " << Msolver.GetNumIterations() << " iterations with a residual norm of " << Msolver.GetFinalNorm() << ".\n";
else
   std::cout << "MINRES did not converge in " << Msolver.GetNumIterations() << " iterations. Residual norm is " << Msolver.GetFinalNorm() << ".\n";
   
CGSolver Lsolver;
Lsolver.SetAbsTol(atol);
Lsolver.SetRelTol(rtol);
Lsolver.SetMaxIter(maxIter);
Lsolver.SetPrintLevel(0);
Lsolver.SetOperator(*A);
x = 0.0;
Lsolver.Mult(rhs,x);
if (Lsolver.GetConverged())
   std::cout << "CG converged in " << Lsolver.GetNumIterations() << " iterations with a residual norm of " << Lsolver.GetFinalNorm() << ".\n";
else
   std::cout << "CG did not converge in " << Lsolver.GetNumIterations() << " iterations. Residual norm is " << Lsolver.GetFinalNorm() << ".\n";
\end{lstlisting}
\newpage
\xitem Save the solution into \texttt{GridFunctions}, which are used for error computation and visualization.
\begin{lstlisting}[frame=single]  
u_mixed.MakeRef(R_space, x_mixed.GetBlock(0), 0);
p_mixed.MakeRef(W_space, x_mixed.GetBlock(1), 0);
p.MakeRef(H1_space,x,0);
\end{lstlisting}
\xitem Get missing velocities from the solutions obtained.\\Remember that $u = -\nabla p$. Mixed components are extracted using the auxiliary variable "\texttt{ue}" defined before.
\begin{lstlisting}[frame=single]  
p.GetDerivative(1,0,ux); 
p.GetDerivative(1,1,uy); 
ux *= -1;
uy *= -1;

VectorGridFunctionCoefficient uc(&u_mixed);
ue.ProjectCoefficient(uc);
GridFunctionCoefficient ux_mixed_coeff(&ue,1);
GridFunctionCoefficient uy_mixed_coeff(&ue,2);
ux_mixed.ProjectCoefficient(ux_mixed_coeff);
uy_mixed.ProjectCoefficient(uy_mixed_coeff);
\end{lstlisting}
\xitem Create the asociated \texttt{Coefficient} objects for error computation.
\begin{lstlisting}[frame=single]  
GridFunction* pp = &p;
GridFunctionCoefficient p_coeff(pp);
GridFunction* uxp = &ux;
GridFunction* uyp = &uy;
GridFunctionCoefficient ux_coeff(uxp);
GridFunctionCoefficient uy_coeff(uyp);
FunctionCoefficient pex_coeff(p_ex); 
VectorFunctionCoefficient uex_coeff(dim,u_ex); 
FunctionCoefficient uex_x_coeff(u_ex_x);
FunctionCoefficient uex_y_coeff(u_ex_y);
\end{lstlisting}
\xitem Define integration rule.
\begin{lstlisting}[frame=single]  
int order_quad = max(2, 2*order+1);
const IntegrationRule *irs[Geometry::NumGeom];
for (int i=0; i < Geometry::NumGeom; ++i){
    irs[i] = &(IntRules.Get(i, order_quad));
}
\end{lstlisting}
\newpage
\xitem Compute exact solution norms. Here, the parameter $2$ in \texttt{ComputeLpNorm} makes reference to the $L^2$ norm.
\begin{lstlisting}[frame=single]  
double norm_p = ComputeLpNorm(2., pex_coeff, *mesh, irs);
double norm_u = ComputeLpNorm(2., uex_coeff, *mesh, irs); 
double norm_ux = ComputeLpNorm(2., uex_x_coeff, *mesh, irs); 
double norm_uy = ComputeLpNorm(2., uex_y_coeff, *mesh, irs); 
\end{lstlisting}
\xitem Compute and print absolute errors.
\begin{lstlisting}[frame=single]  
double abs_err_u_mixed = u_mixed.ComputeL2Error(uex_coeff,irs);
printf("Velocity Mixed Absolute Error: %e\n", abs_err_u_mixed / norm_u);
double abs_err_p_mixed = p_mixed.ComputeL2Error(pex_coeff,irs);
printf("Pressure Mixed Absolute Error: %e\n", abs_err_p_mixed / norm_p);
double abs_err_p = p.ComputeL2Error(pex_coeff,irs); 
printf("Pressure Absolute Error: %e\n", abs_err_p / norm_p);
double abs_err_ux = ux.ComputeL2Error(uex_x_coeff,irs);
double abs_err_uy = uy.ComputeL2Error(uex_y_coeff,irs);
double abs_err_u = pow(pow(abs_err_ux,2)+pow(abs_err_uy,2),0.5);
printf("Velocity Absolute Error: %e\n", abs_err_u / norm_u);
\end{lstlisting}
\xitem Compute and print comparison errors.
\begin{lstlisting}[frame=single]  
double err_ux = ux_mixed.ComputeL2Error(ux_coeff,irs);
double err_uy = uy_mixed.ComputeL2Error(uy_coeff,irs);
double err_u = pow(pow(err_ux,2)+pow(err_uy,2),0.5);
printf("Velocity Comparison Error: %e\n", err_u / norm_u);
double err_p = p_mixed.ComputeL2Error(p_coeff, irs);
printf("Pressure Comparison Error: %e\n", err_p / norm_p);
\end{lstlisting}
\newpage
\xitem Visualize the solutions and the domain. \texttt{GLVis} visualization tool uses port $19916$ to receive data.
\begin{lstlisting}[frame=single]  
char vishost[] = "localhost";
int  visport   = 19916;
if(visualization){
    Vector x_domain(H1_space->GetVSize());
    GridFunction domain(H1_space);
    x_domain=0.0;
    domain.MakeRef(H1_space,x_domain,0);
    socketstream dom_sock(vishost, visport);
    dom_sock.precision(8);
    dom_sock << "solution\n" << *mesh << domain << "window_title 'Domain'" << endl;
    
    socketstream um_sock(vishost, visport);
    um_sock.precision(8);
    um_sock << "solution\n" << *mesh << u_mixed << "window_title 'Velocity Mixed'" << endl;
    socketstream pm_sock(vishost, visport);
    pm_sock.precision(8);
    pm_sock << "solution\n" << *mesh << p_mixed << "window_title 'Pressure Mixed'" << endl;
    socketstream uxm_sock(vishost, visport);
    uxm_sock.precision(8);
    uxm_sock << "solution\n" << *mesh << ux_mixed << "window_title 'X Velocity Mixed'" << endl;
    socketstream uym_sock(vishost, visport);
    uym_sock.precision(8);
    uym_sock << "solution\n" << *mesh << uy_mixed << "window_title 'Y Velocity Mixed'" << endl;
    
    socketstream p_sock(vishost, visport);
    p_sock.precision(8);
    p_sock << "solution\n" << *mesh << p << "window_title 'Pressure'" << endl;
    socketstream ux_sock(vishost, visport);
    ux_sock.precision(8);
    ux_sock << "solution\n" << *mesh << ux << "window_title 'X Velocity'" << endl;
    socketstream uy_sock(vishost, visport);
    uy_sock.precision(8);
    uy_sock << "solution\n" << *mesh << uy << "window_title 'Y Velocity'" << endl;
}}
\end{lstlisting}
\xitem Define \texttt{C} functions.
\begin{lstlisting}[frame=single]  
void fFun(const Vector & x, Vector & f){
    f = 0.0;
}
double gFun(const Vector & x){
    return -1.0;
}
double f_bound(const Vector & x){
    return 0.0;
}
void u_ex(const Vector & x, Vector & u){
   double xi(x(0));
   double yi(x(1));
   double zi(0.0);
   u(0) = - exp(xi)*sin(yi)*cos(zi);
   u(1) = - exp(xi)*cos(yi)*cos(zi);
}
double u_ex_x(const Vector & x){
   double xi(x(0));
   double yi(x(1));
   double zi(0.0);
   return -exp(xi)*sin(yi)*cos(zi);
}
double u_ex_y(const Vector & x){
   double xi(x(0));
   double yi(x(1));
   double zi(0.0);
   return -exp(xi)*cos(yi)*cos(zi);
}
double p_ex(const Vector & x){
   double xi(x(0));
   double yi(x(1));
   double zi(0.0);
   return exp(xi)*sin(yi)*cos(zi);
}
\end{lstlisting}

\newpage
\subsection{Appendix B : Numerical values of the comparison}\label{sec:appB}
The \textit{order} parameter will be fixed for each table and $h$ parameter is shown in the first column. To interpret the results take into account that \textbf{P} refers to pressure, \textbf{U} refers to velocity, \textbf{mx} refers to mixed (from mixed finite element method), \textbf{err} refers to absolute error (compared to the exact solution), and \textbf{comp} refers to comparison (the error between the two solutions obtained by the two different methods).\\
\newline
\indent
\textbf{\textit{Order = 1}}
\begin{table}[h!]
\scalebox{0.7}{
\begin{tabular}{|c|c|c|c|c|c|c|}
\hline
\textbf{h} & \textbf{P comp} & \textbf{P err} & \textbf{Pmx err} & \textbf{U comp} & \textbf{U err} & \textbf{U mx err} \\ \hline
0.572063   & 7.549479e-02    & 1.021287e+00   & 1.025477e+00     & 3.680827e-02    & 1.029378e+00   & 1.037635e+00      \\ \hline
0.286032   & 3.627089e-02    & 1.022781e+00   & 1.023990e+00     & 1.727281e-02    & 1.032760e+00   & 1.035055e+00      \\ \hline
0.143016   & 1.791509e-02    & 1.023236e+00   & 1.023596e+00     & 9.222996e-03    & 1.033725e+00   & 1.034369e+00      \\ \hline
0.0715079  & 8.922939e-03    & 1.023372e+00   & 1.023480e+00     & 5.111295e-03    & 1.033999e+00   & 1.034182e+00      \\ \hline
0.035754   & 4.455715e-03    & 1.023412e+00   & 1.023445e+00     & 2.859769e-03    & 1.034077e+00   & 1.034130e+00      \\ \hline
0.017877   & 2.226845e-03    & 1.023424e+00   & 1.023435e+00     & 1.603788e-03    & 1.034100e+00   & 1.034115e+00      \\ \hline
\end{tabular}}
\end{table}

\textbf{\textit{Order = 2}}
\begin{table}[h!]
\scalebox{0.7}{
\begin{tabular}{|c|c|c|c|c|c|c|}
\hline
\textbf{h} & \textbf{P comp} & \textbf{P err} & \textbf{Pmx err} & \textbf{U comp} & \textbf{U err} & \textbf{U mx err} \\ \hline
0.572063   & 8.069013e-03    & 1.023329e+00   & 1.023554e+00     & 1.399079e-02    & 1.033924e+00   & 1.034255e+00      \\ \hline
0.286032   & 2.138257e-03    & 1.023391e+00   & 1.023470e+00     & 7.845012e-03    & 1.034056e+00   & 1.034146e+00      \\ \hline
0.143016   & 5.704347e-04    & 1.023417e+00   & 1.023442e+00     & 4.400448e-03    & 1.034093e+00   & 1.034120e+00      \\ \hline
0.0715079  & 1.537926e-04    & 1.023426e+00   & 1.023434e+00     & 2.469526e-03    & 1.034104e+00   & 1.034112e+00      \\ \hline
0.035754   & 4.194302e-05    & 1.023428e+00   & 1.023431e+00     & 1.385966e-03    & 1.034107e+00   & 1.034110e+00      \\ \hline
\end{tabular}}
\end{table}

\textbf{\textit{Order = 3}}
\begin{table}[h!]
\scalebox{0.7}{
\begin{tabular}{|c|c|c|c|c|c|c|}
\hline
\textbf{h} & \textbf{P comp} & \textbf{P err} & \textbf{Pmx err} & \textbf{U comp} & \textbf{U err} & \textbf{U mx err} \\ \hline
0.572063   & 8.691241e-04    & 1.023389e+00   & 1.023471e+00     & 8.745151e-03    & 1.034060e+00   & 1.034143e+00      \\ \hline
0.286032   & 2.477673e-04    & 1.023417e+00   & 1.023443e+00     & 4.911967e-03    & 1.034094e+00   & 1.034120e+00      \\ \hline
0.143016   & 7.316263e-05    & 1.023426e+00   & 1.023434e+00     & 2.756849e-03    & 1.034104e+00   & 1.034112e+00      \\ \hline
0.0715079  & 2.178864e-05    & 1.023428e+00   & 1.023431e+00     & 1.547232e-03    & 1.034108e+00   & 1.034110e+00      \\ \hline
\end{tabular}}
\end{table}

\textbf{\textit{Order = 4}}
\begin{table}[h!]
\scalebox{0.7}{
\begin{tabular}{|c|c|c|c|c|c|c|}
\hline
\textbf{h} & \textbf{P comp} & \textbf{P err} & \textbf{Pmx err} & \textbf{U comp} & \textbf{U err} & \textbf{U mx err} \\ \hline
0.572063   & 3.199774e-04    & 1.023412e+00   & 1.023448e+00     & 6.119857e-03    & 1.034088e+00   & 1.034124e+00      \\ \hline
0.286032   & 9.547574e-05    & 1.023424e+00   & 1.023435e+00     & 3.434952e-03    & 1.034103e+00   & 1.034114e+00      \\ \hline
0.143016   & 2.862666e-05    & 1.023428e+00   & 1.023431e+00     & 1.927814e-03    & 1.034107e+00   & 1.034111e+00      \\ \hline
\end{tabular}}
\end{table}

\newpage
\textbf{\textit{Order = 5}}
\begin{table}[h!]
\scalebox{0.7}{
\begin{tabular}{|c|c|c|c|c|c|c|}
\hline
\textbf{h} & \textbf{P comp} & \textbf{P err} & \textbf{Pmx err} & \textbf{U comp} & \textbf{U err} & \textbf{U mx err} \\ \hline
0.572063   & 1.552006e-04    & 1.023420e+00   & 1.023439e+00     & 4.578518e-03    & 1.034099e+00   & 1.034117e+00      \\ \hline
0.286032   & 4.658038e-05    & 1.023427e+00   & 1.023433e+00     & 2.569749e-03    & 1.034106e+00   & 1.034112e+00      \\ \hline
0.143016   & 1.406993e-05    & 1.023429e+00   & 1.023431e+00     & 1.442205e-03    & 1.034108e+00   & 1.034110e+00      \\ \hline
\end{tabular}}
\end{table}

\textbf{\textit{Order = 6}}
\begin{table}[h!]
\scalebox{0.7}{
\begin{tabular}{|c|c|c|c|c|c|c|}
\hline
\textbf{h} & \textbf{P comp} & \textbf{P err} & \textbf{Pmx err} & \textbf{U comp} & \textbf{U err} & \textbf{U mx err} \\ \hline
0.572063   & 8.612580e-05    & 1.023424e+00   & 1.023435e+00     & 3.584133e-03    & 1.034103e+00   & 1.034114e+00      \\ \hline
0.286032   & 2.600417e-05    & 1.023428e+00   & 1.023431e+00     & 2.011608e-03    & 1.034107e+00   & 1.034111e+00      \\ \hline
0.143016   & 7.897631e-06    & 1.023429e+00   & 1.023430e+00     & 1.128989e-03    & 1.034109e+00   & 1.034110e+00      \\ \hline
\end{tabular}}
\end{table}

\textbf{\textit{Order = 7}}
\begin{table}[h!]
\scalebox{0.7}{
\begin{tabular}{|c|c|c|c|c|c|c|}
\hline
\textbf{h} & \textbf{P comp} & \textbf{P err} & \textbf{Pmx err} & \textbf{U comp} & \textbf{U err} & \textbf{U mx err} \\ \hline
0.572063   & 5.243187e-05    & 1.023426e+00   & 1.023433e+00     & 2.899307e-03    & 1.034105e+00   & 1.034112e+00      \\ \hline
0.286032   & 1.589631e-05    & 1.023429e+00   & 1.023431e+00     & 1.627221e-03    & 1.034108e+00   & 1.034110e+00      \\ \hline
\end{tabular}}
\end{table}

\textbf{\textit{Order = 8}}
\begin{table}[h!]
\scalebox{0.7}{
\begin{tabular}{|c|c|c|c|c|c|c|}
\hline
\textbf{h} & \textbf{P comp} & \textbf{P err} & \textbf{Pmx err} & \textbf{U comp} & \textbf{U err} & \textbf{U mx err} \\ \hline
0.572063   & 3.409225e-05    & 1.023427e+00   & 1.023432e+00     & 2.404311e-03    & 1.034107e+00   & 1.034111e+00      \\ \hline
0.286032   & 1.037969e-05    & 1.023429e+00   & 1.023430e+00     & 1.349427e-03    & 1.034108e+00   & 1.034110e+00      \\ \hline
\end{tabular}}
\end{table}

\textbf{\textit{Order = 9}}
\begin{table}[h!]
\scalebox{0.7}{
\begin{tabular}{|c|c|c|c|c|c|c|}
\hline
\textbf{h} & \textbf{P comp} & \textbf{P err} & \textbf{Pmx err} & \textbf{U comp} & \textbf{U err} & \textbf{U mx err} \\ \hline
0.572063   & 2.328387e-05    & 1.023428e+00   & 1.023431e+00     & 2.033288e-03    & 1.034107e+00   & 1.034110e+00      \\ \hline
0.286032   & 7.124397e-06    & 1.023429e+00   & 1.023430e+00     & 1.141177e-03    & 1.034109e+00   & 1.034110e+00      \\ \hline
\end{tabular}}
\end{table}

\textbf{\textit{Order = 10}}
\begin{table}[h!]
\scalebox{0.7}{
\begin{tabular}{|c|c|c|c|c|c|c|}
\hline
\textbf{h} & \textbf{P comp} & \textbf{P err} & \textbf{Pmx err} & \textbf{U comp} & \textbf{U err} & \textbf{U mx err} \\ \hline
0.572063   & 1.664200e-05    & 1.023429e+00   & 1.023431e+00     & 1.746755e-03    & 1.034108e+00   & 1.034110e+00      \\ \hline
0.286032   & 5.085321e-06    & 1.023429e+00   & 1.023430e+00     & 9.803705e-04    & 1.034109e+00   & 1.034109e+00      \\ \hline
\end{tabular}}
\end{table}
\newpage
\subsection{Appendix C : MiniApp Code for Navier-Stokes}\label{sec:appC}
Here, the code used for Section \ref{sec:NSExp} (written in C++) is shown, with a brief explanations of it's functionality.\\
\xitem Include the required libraries (including navier miniapp).
\begin{lstlisting}[frame=single]
#include "navier_solver.hpp"
#include <fstream>
using namespace mfem;
using namespace navier;
using namespace std;
\end{lstlisting}
\xitem Define the context for the problem. In this case, we define parameters for the 2D and 3D experiment. In general, the parameters to define should be $\nu$, $dt$, $T$, $k$ and $\#refinements$.
\begin{lstlisting}[frame=single]
struct NavierContext{
    //Parameters for 2D experiment
    int max_order_steady = 6;
    int max_refinements_steady = 5;
    double kinvis_steady = 1.0 / 40.0;
    double t_final_steady = 50 * 0.001;
    double dt_steady = 0.001;
    double delta = 0.001;
  
    //Parameters for 3D experiment
    double kinvis_3d = 0.001;
    double t_final_3d = 8.0;
    double dt_3d = 1e-3;
} ctx;
\end{lstlisting}
\xitem Define a velocity as a C function. This function represents the initial and boundary conditions for the velocity in the 3D experiment.
\begin{lstlisting}[frame=single]
void vel_3d(const Vector &x, double t, Vector &u){
   double xi = x(0);
   double yi = x(1);
   double zi = x(2);
   double U = 2.25;
   if(xi <= 1e-8){
      u(0) = 16.0 * U * yi * zi * sin(M_PI * t / 8.0) * (0.41 - yi) * (0.41 - zi) / pow(0.41, 4.0);
   }else{ u(0) = 0.0; }
   u(1) = 0.0;
   u(2) = 0.0;}
\end{lstlisting}
\xitem Define another velocity as a C function. This function represents the boundary condition, $u_0$, for the 2D experiment.
\begin{lstlisting}[frame=single]
void vel_steady(const Vector &x, double t, Vector &u){
    double reynolds = 1.0 / ctx.kinvis_steady;
    double lam = 0.5 * reynolds - sqrt(0.25 * reynolds * reynolds + 4.0 * M_PI * M_PI);
    double xi = x(0);
    double yi = x(1);
    u(0) = 1.0 - exp(lam * xi) * cos(2.0 * M_PI * yi);
    u(1) = lam / (2.0 * M_PI) * exp(lam * xi) 
           * sin(2.0 * M_PI * yi);}
\end{lstlisting}
\xitem Define a third velocity as a C function. This function represents the initial condition, $u_i$, for the 2D experiment.
\begin{lstlisting}[frame=single]
void vel(const Vector &x, double t, Vector &u){
    double reynolds = 1.0 / ctx.kinvis_steady;
    double lam = 0.5 * reynolds - sqrt(0.25 * reynolds * reynolds + 4.0 * M_PI * M_PI);
    double xi = x(0);
    double yi = x(1);
    double delta = ctx.delta;
    u(0) = 1.0 - exp(lam * xi) * cos(2.0 * M_PI * yi);
    u(1) = lam / (2.0 * M_PI) * exp(lam * xi) * sin(2.0 * M_PI * yi);
    u(0) = u(0)+delta*(xi+0.5)*(xi-1)*(yi+0.5)*(yi-1.5);
    u(1) = u(1)+delta*(xi+0.5)*(xi-1)*(yi+0.5)*(yi-1.5);
}
\end{lstlisting}
\xitem Begin a function that receives an order and the amount of refinements. This function will compute the 2D experiment. First, it defines the mesh shown on figure \ref{fig:2Dmesh}.
\begin{lstlisting}[frame=single]
void NS_steady(int order, int refinement){
    Mesh *mesh = new Mesh(2, 4, Element::QUADRILATERAL, false, 1.5, 2.0);
    mesh->EnsureNodes();
    GridFunction *nodes = mesh->GetNodes();
    *nodes -= 0.5;
\end{lstlisting}
\xitem Refine the mesh and create the parallel version of the mesh.
\begin{lstlisting}[frame=single]
    for (int i = 0; i < refinement; ++i){
        mesh->UniformRefinement();}
    auto *pmesh = new ParMesh(MPI_COMM_WORLD, *mesh);
    delete mesh;
\end{lstlisting}
\xitem Create the NavierSolver object (receives the mesh, $k$, and $\nu$ as parameters). Set the initial condition, the function \texttt{vel}. And set the boundary condition, the function \texttt{vel\_steady}.
\begin{lstlisting}[frame=single]
    //Create the flow solver
    NavierSolver flowsolver(pmesh, order, ctx.kinvis_steady);
    flowsolver.EnablePA(true);

    //Set the initial condition
    ParGridFunction *u_ic = flowsolver.GetCurrentVelocity();
    VectorFunctionCoefficient u_excoeff(pmesh->
    Dimension(), vel);
    u_ic->ProjectCoefficient(u_excoeff);

    //Add Dirichlet boundary conditions to velocity space
    Array<int> attr(pmesh->bdr_attributes.Max());
    attr = 1;
    flowsolver.AddVelDirichletBC(vel_steady, attr);
\end{lstlisting}
\xitem Set up the problem ($dt$) and define the ParGridFunctions to store the solution.
\begin{lstlisting}[frame=single]
    double t = 0.0;
    bool last_step = false;
    flowsolver.Setup(ctx.dt_steady);
    ParGridFunction *u_gf = flowsolver.GetCurrentVelocity();
    ParGridFunction *p_gf = flowsolver.GetCurrentPressure();
\end{lstlisting}
\xitem Create the ParaView file and associate the variables to save the solution.
\begin{lstlisting}[frame=single]
    ParaViewDataCollection pvdc("STEADY"+to_string(order)+to_string(refinement),pmesh);
    pvdc.SetDataFormat(VTKFormat::BINARY32);
    pvdc.SetHighOrderOutput(true);
    pvdc.SetLevelsOfDetail(order);
    pvdc.SetCycle(0);
    pvdc.SetTime(t);
    pvdc.RegisterField("velocity",u_gf);
    pvdc.RegisterField("pressure",p_gf);
    pvdc.Save();
\end{lstlisting}
\newpage
\xitem Iterate from $t=0$ to $t=T$. For each iteration, the step is taken and saved in the ParaView file.
\begin{lstlisting}[frame=single]
    for(int step = 0; !last_step; ++step){
        //Check for final step
        if (t + ctx.dt_steady >= ctx.t_final_steady - ctx.dt_steady / 2){last_step = true;}

        //Do the step
        flowsolver.Step(t, ctx.dt_steady, step);

        //Save paraview information
        pvdc.SetCycle(step);
        pvdc.SetTime(t);
        pvdc.Save();}
    delete pmesh;}
\end{lstlisting}
\xitem Begin a function that receives an order and the amount of refinements. This function will compute the 3D experiment. First, it defines the mesh associated to the domain shown on figure \ref{fig:3Domain}. This part of the code was planned to be used with different orders and refinement levels, however, it was only run with one case.
\begin{lstlisting}[frame=single]
void NS_3D(int order, int refinement){
    Mesh *mesh = new Mesh("box-cylinder.mesh");
\end{lstlisting}
\xitem Refine the mesh and create the parallel version of the mesh.
\begin{lstlisting}[frame=single]
    for (int i = 0; i < refinement; ++i){
        mesh->UniformRefinement();}
    auto *pmesh = new ParMesh(MPI_COMM_WORLD, *mesh);
    delete mesh;
\end{lstlisting}
\newpage
\xitem Create the NavierSolver object. Set the initial condition, the function \texttt{vel\_3d}. And set the boundary condition, the function \texttt{vel\_3d} too. Notice that the boundary condition is only applied to some parts of the mesh.
\begin{lstlisting}[frame=single]
    //Create the flow solver
    NavierSolver flowsolver(pmesh, order, ctx.kinvis_3d);
    flowsolver.EnablePA(true);

    //Set the initial condition
    ParGridFunction *u_ic = flowsolver.GetCurrentVelocity();
    VectorFunctionCoefficient u_excoeff(pmesh->
    Dimension(), vel_3d);
    u_ic->ProjectCoefficient(u_excoeff);

    //Add Dirichlet boundary conditions to velocity space restricted to selected attributes on the mesh
    Array<int> attr(pmesh->bdr_attributes.Max());
    attr[0] = 1; //Inlet
    attr[2] = 1; //Walls
    flowsolver.AddVelDirichletBC(vel_3d, attr);
\end{lstlisting}
\xitem Set up the problem ($dt$) and define the ParGridFunctions to store the solution.
\begin{lstlisting}[frame=single]
    double t = 0.0;
    bool last_step = false;
    flowsolver.Setup(ctx.dt_3d);
    ParGridFunction *u_gf = flowsolver.GetCurrentVelocity();
    ParGridFunction *p_gf = flowsolver.GetCurrentPressure();
\end{lstlisting}
\newpage
\xitem Create the ParaView file and associate the variables to save the solution.
\begin{lstlisting}[frame=single]
    ParaViewDataCollection pvdc("3D"+to_string(order)+to_string(refinement), pmesh);
    pvdc.SetDataFormat(VTKFormat::BINARY32);
    pvdc.SetHighOrderOutput(true);
    pvdc.SetLevelsOfDetail(order);
    pvdc.SetCycle(0);
    pvdc.SetTime(t);
    pvdc.RegisterField("velocity", u_gf);
    pvdc.RegisterField("pressure", p_gf);
    pvdc.Save();
\end{lstlisting}
\xitem Iterate from $t=0$ to $t=T$. For each iteration, the step is taken and saved in the ParaView file.
\begin{lstlisting}[frame=single]
    for(int step = 0; !last_step; ++step){
        //Check for final step
        if(t + ctx.dt_3d >= ctx.t_final_3d - ctx.dt_3d / 2){last_step = true;}

        //Do the step
        flowsolver.Step(t, ctx.dt_3d, step);

        //Save paraview information every 10 steps
        if (step % 10 == 0){
            pvdc.SetCycle(step);
            pvdc.SetTime(t);
            pvdc.Save();}}
    delete pmesh;}
\end{lstlisting}
\newpage
\xitem Finally, define the main function with a MPI session (for parallel computation). The 2D experiment corresponds to the use of the function \texttt{NS\_steady} while iterating through orders and refinement levels. And, the 3D experiment is running the function \texttt{NS\_3D} with order $4$. No refinements were done because the original mesh (done by MFEM \cite{MFEM}) gives a good solution already.
\begin{lstlisting}[frame=single]
int main(int argc, char *argv[]){

    //Define mpi sessions (parallel programming)
    MPI_Session mpi(argc,argv);

    //Steady experiment
    for(int order=1;order<=ctx.max_order_steady;order++){
        for(int refinements=0;refinements<=ctx.max_refinements_steady;refinements++){
            NS_steady(order,refinements);
        }
    }

    //3D experiment
    NS_3D(4,0);
}
\end{lstlisting}
\textit{Note:} This code was run using the command \texttt{mpirun -n 4}, which uses 4 cores of the computer.
\end{document}